\newtheorem{thm}{Theorem}[section]
\newtheorem{lem}[thm]{Lemma}
\newtheorem{false statement}{False statement}
\theoremstyle{definition}
\newtheorem{claim}{Claim}
\makeatletter \@addtoreset{equation}{section}
\begin{document}
\title{\bf\Large Tur\'{a}n Problems for Vertex-disjoint Cliques in Multi-partite Hypergraphs}
\date{}
\author{ Erica L.L. Liu$^1$, Jian Wang$^2$\\[10pt]
$^{1}$Center for Applied Mathematics\\
Tianjin University\\
Tianjin 300072, P. R. China\\[6pt]
$^{2}$Department of Mathematics\\
Taiyuan University of Technology\\
Taiyuan 030024, P. R. China\\[6pt]
E-mail:  $^1$liulingling@tju.edu.cn, $^2$wangjian01@tyut.edu.cn
}

\maketitle

\begin{abstract}
{For two $s$-uniform hypergraphs $H$ and $F$, the Tur\'{a}n number $ex_s(H,F)$ is the maximum number of edges in an $F$-free subgraph of $H$. Let $s, r, k, n_1, \ldots, n_r$ be integers satisfying $2\leq s\leq r$ and $n_1\leq n_2\leq \cdots\leq n_r$.} De Silva, Heysse and Young determined $ex_2(K_{n_1, \ldots, n_r}, kK_2)$ and De Silva, Heysse, Kapilow, Schenfisch and Young determined  $ex_2(K_{n_1, \ldots, n_r},kK_r)$. In this paper, as a generalization of these results, we consider three Tur\'{a}n-type problems for $k$ disjoint cliques in $r$-partite $s$-uniform hypergraphs. First, we consider a multi-partite version of the Erd\H{o}s matching conjecture and determine $ex_s(K_{n_1, \ldots, n_r}^{(s)},kK_s^{(s)})$ for $n_1\geq s^3k^2+sr$. Then, using a probabilistic argument, we determine $ex_s(K_{n_1, \ldots, n_r}^{(s)},kK_r^{(s)})$ for all $n_1\geq k$. Recently, Alon and Shikhelman determined asymptotically, for all $F$, the generalized Tur\'{a}n number $ex_2(K_n,K_s,F)$, which is the maximum number of copies of $K_s$ in an $F$-free graph on $n$ vertices. Here we determine $ex_2(K_{n_1, \ldots, n_r}, K_s, kK_r)$ with  $n_1\geq k$ and $n_3=\cdots=n_r$.
Utilizing a result on rainbow matchings due to Glebov, Sudakov and Szab\'{o}, we determine  $ex_2(K_{n_1, \ldots, n_r}, K_s, kK_r)$ for all $n_1, \ldots, n_r$ with $n_4\geq r^r(k-1)k^{2r-2}$.


\end{abstract}

\noindent{\bf Keywords:} Tur\'{a}n number; multi-partite hypergraphs;  probabilistic argument.

\medskip
\noindent {\bf Mathematics Subject Classification (2010):} 05C35, 05C65

\section{Introduction}
An $s$-uniform hypergraph, or simply an $s$-graph, is a hypergraph whose edges have exactly $s$ vertices. For an $s$-graph $H$, let $V(H)$ be the vertex set of $H$ and $E(H)$ the edge set of $H$.  An $s$-graph $H$ is called $F$-free if $H$ does not contain any copy of $F$ as a subgraph. For two $s$-graphs $H$ and $F$, the Tur\'{a}n number $ex_s(H,F)$ is the maximum number of edges of an $F$-free subgraph of $H$. Denote by $K^{(s)}_t$ the complete $s$-graph on $t$ vertices. A copy of $K^{(s)}_t$ in an $s$-graph $H$ is also called a $t$-clique of $H$. Let $kK^{(s)}_t$ denote the $s$-graph consisting of $k$ vertex-disjoint copies of $K^{(s)}_t$. If $t=s$, then $kK^{(s)}_s$ represents a matching of size $k$. Let $n_1, \ldots, n_r$ be integers and $V_1, V_2, \ldots, V_r$ be disjoint vertex sets with $|V_i|=n_i$ for each $i=1,\ldots,r$. A complete $r$-partite $s$-graph on vertex classes $V_1, V_2, \ldots, V_r$, denoted by $K^{(s)}(V_1, V_2, \ldots, V_r)$ or $K_{n_1,n_2, \ldots, n_r}^{(s)}$, is defined to be the $s$-graph whose edge set consists of all the $s$-element subsets $S$ of $V_1\cup V_2\cup\cdots\cup V_r$ such that $|S\cap V_i|\leq 1$ for all $i=1,\ldots,r$. An $s$-graph $H$ is called an $r$-partite $s$-graph on vertex classes $V_1, V_2, \ldots , V_r$ if $H$ is a subgraph of $K^{(s)}(V_1, V_2, \ldots, V_r)$. For $s=2$, we often write $K_t, kK_t, K(V_1, V_2, \ldots, V_r), K_{n_1, n_2,\ldots, n_r}$ and $ex(H,F)$ instead of $K_t^{(2)}, kK_t^{(2)}, K^{(2)}(V_1, V_2, \ldots, V_r),K_{n_1, n_2,\ldots, n_r}^{(2)}$ and $ex_2(H,F)$. Let $[n]$ denote the set $\{1,2,\ldots,n\}$ and $[m,n]$ denote the set $\{m,m+1,\ldots,n\}$ for $m\leq n$.

Tur\'{a}n-type problems were first considered by Mantel \cite{mantel} in 1907, who determined $ex(K_n,K_3)$. In 1941, Tur\'{a}n \cite{turan} showed that the balanced complete $t$-partite graph on $n$ vertices, called the Tur\'{a}n graph and denoted by $T_{n,t}$, is the unique graph that maximises the number of edges among all $K_{t+1}$-free graphs on $n$ vertices.  Since then, Tur\'{a}n numbers of graphs and hypergraphs have been extensively studied. However, even though lots of progress has been made, most of the Tur\'{a}n problems for bipartite graphs and for hypergraphs are still open. Specifically, none of the Tur\'{a}n numbers $ex_s(K_n^{(s)},K_t^{(s)})$ with $t>s>2$ has yet been determined, even asymptotically. We recommend the reader to consult \cite{keevash, sido} for surveys on Tur\'{a}n numbers of graphs and hypergraphs.

Many problems in additive combinatorics are closely related to Tur\'{a}n-type problems in multi-partite graphs and hypergraphs.
Recently, Tur\'{a}n problems in multi-partite graphs have received a lot of attention, see \cite{ben,de2,hanzhao}. The following result, which is attributed to De Silva, Heysse and Young, determines $ex(K_{n_1, \ldots, n_r}, kK_2)$.

\begin{thm}\label{s1}
For $n_1\leq n_2\leq\cdots\leq n_r$ and $k\leq n_1$,
\begin{eqnarray*}
ex(K_{n_1,n_2, \ldots, n_r}, kK_2)=(k-1)(n_2+\cdots+n_r).
\end{eqnarray*}
\end{thm}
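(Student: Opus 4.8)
My plan is to prove both inequalities directly, obtaining the upper bound by analysing a maximum matching of an extremal graph.

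For the lower bound, fix $k-1$ vertices in the smallest class $V_1$ (possible since $k-1<k\le n_1$) and take all edges of $K_{n_1,\ldots,n_r}$ incident with them. As $V_1$ is independent, these $k-1$ vertices form a vertex cover, so the resulting subgraph has matching number at most $k-1$ and hence is $kK_2$-free; since each chosen vertex is joined to all of $V_2\cup\cdots\cup V_r$ and to nothing in $V_1$, the number of edges is exactly $(k-1)(n_2+\cdots+n_r)$.

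For the upper bound, let $G\subseteq K_{n_1,\ldots,n_r}$ be $kK_2$-free with the maximum number of edges and let $M=\{x_iy_i:1\le i\le m\}$ be a maximum matching, so $m=\nu(G)\le k-1$. I may assume $m=k-1$: otherwise $G$ is $(k-1)K_2$-free and induction on $k$ gives $e(G)\le(k-2)(n_2+\cdots+n_r)$. Put $W=V(M)$ and $U=V(G)\setminus W$. Because $M$ is maximum, $U$ is independent, so every edge meets $W$ and I bound $e(G)=e(G[W])+e(W,U)$. Two structural facts drive the count. First, each edge of $M$ has its ends in two different classes, so $w_j:=|W\cap V_j|\le m=k-1$ for every $j$. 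Second, for a matching edge $x_iy_i$ there are no distinct unmatched $u,u'$ with $ux_i,\,y_iu'\in E(G)$, since $u\,x_i\,y_i\,u'$ would be an augmenting path; hence for each $i$ at least one endpoint of $x_iy_i$ has at most one neighbour in $U$.

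Using these facts I would assign to each edge of $M$ a ``port'' endpoint $p_i$ carrying essentially all of its edges to $U$, bounding $e(W,U)\le\sum_i|N(p_i)\cap U|$ plus a lower-order term coming from the non-port endpoints. A port $p_i$ lying in class $V_c$ has no neighbour inside $V_c$, so $|N(p_i)\cap U|\le|U|-|U\cap V_c|$; combining this with $e(G[W])\le\binom{2(k-1)}{2}-\sum_j\binom{w_j}{2}$ reduces the problem to maximising an explicit function of the profile $(w_j)$ and of how the ports are distributed among the classes. The main work — and the place where the smallest class enters — is this optimisation: the constraint $w_j\le k-1$ forbids packing a whole small class into $W$, and one checks that the maximum is attained when all ports sit in $V_1$ with $w_1=k-1$, which returns exactly $(k-1)(n_2+\cdots+n_r)$.

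The hard part will be eliminating the additive $O(k)$ slack so that the estimate is sharp. This slack is precisely the contribution of matching edges both of whose endpoints send an edge to a common unmatched vertex $u_0$; for such an edge one can re-route the matching (replacing $x_iy_i$ by $x_iu_0$) to get another maximum matching, and I expect that choosing $M$ to minimise a suitable secondary parameter — e.g. the number of such edges, equivalently the $U$-degree of the non-port endpoints — forces this contribution to vanish and collapses the bound to the claimed equality. Some care is also needed when $|U|$ is comparable to $k$, where a clique-type competitor of $\binom{2k-1}{2}$ edges appears; but the hypothesis $k\le n_1$ guarantees $|U|\ge 2$ and that the incident-edges construction dominates.
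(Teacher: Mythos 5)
Your lower bound is fine and matches the paper's construction. The upper bound, however, has a genuine gap at exactly the step you label ``the main work''. The two estimates you propose to combine, $e(G[W])\le\binom{2(k-1)}{2}-\sum_j\binom{w_j}{2}$ and $e(W,U)\le\sum_i\bigl(|U|-|U\cap V_{c_i}|\bigr)$ plus lower-order terms, are treated as \emph{independent} constraints, and their joint maximum is not $(k-1)(n_2+\cdots+n_r)$. Indeed, put all ports in $V_1$ with $w_1=k-1$ and (say when $r\ge k$) the $k-1$ non-port endpoints in pairwise distinct classes, so that $\sum_{j\ge2}\binom{w_j}{2}=0$: the port bound gives $(k-1)\bigl(n-n_1-(k-1)\bigr)$ while the $W$-bound gives $\binom{2k-2}{2}-\binom{k-1}{2}=\tfrac{(k-1)(3k-4)}{2}$, for a total of $(k-1)(n_2+\cdots+n_r)+\binom{k-1}{2}$ before you even add the common-neighbour term. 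So the slack is of order $k^2$ and comes from $e(G[W])$, not, as you claim, ``precisely'' from matching edges whose two endpoints share an unmatched neighbour; your proposed fix (rerouting $x_iy_i$ to $x_iu_0$ and minimising a secondary parameter) only removes an $O(k)$ term and cannot close this gap. What is missing is the coupling between the two counts: for instance, if ports $x_i,x_j$ each have at least two neighbours in $U$, the length-five augmenting path $u\,x_i\,y_i\,y_j\,x_j\,u'$ forbids the edge $y_iy_j$, so a dense $G[W]$ and large port degrees cannot coexist. Making such alternating-path interactions quantitative (in effect a Gallai--Edmonds-type analysis) is the real content of a sharp proof along your lines, and it is absent from the sketch.

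It is worth seeing how the paper sidesteps this difficulty. For $r\ge4$ it uses a degree dichotomy: if some vertex has degree at least $2k-1$, the set $X$ of all such vertices is removed, $\nu(G\setminus X)\le k-1-|X|$, and induction on $k$ applies; if every degree is at most $2k-2$, a maximum-matching count gives $e(G)\le(k-1)(3k-3)$, which is already \emph{strictly} below $(k-1)(n_2+\cdots+n_r)$ because $n_1\ge k$ and there are at least three classes besides $V_1$. Sharpness is thus never extracted from the matching structure itself. This also explains why the paper treats $r=2$ and $r=3$ by entirely different arguments (an explicit decomposition of the edge set of $K^{(2)}_{n_1,n_2}$ into matchings, and a shifting/stability argument with induction on $k$, respectively): for small $r$ the crude $O(k^2)$ count no longer sits below the target, which is exactly the regime of your ``clique-type competitor''. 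Your sketch treats all $r\ge2$ uniformly, and so it is stuck precisely where the problem is hardest.
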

Since it seems that their preprint has not been published online, we present a proof of Theorem \ref{s1} in the Appendix for the completeness of the paper. In \cite{de2}, De Silva, Heysse, Kapilow, Schenfisch and Young determined  $ex(K_{n_1, \ldots, n_r},kK_r)$.
\begin{thm}\label{s2}\cite{de2}
For $n_1\leq n_2\leq \cdots\leq n_r$ and $k\leq n_1$,
\begin{eqnarray*}
ex(K_{n_1, \ldots, n_r}, kK_r)=\sum_{1\leq i< j\leq r}n_in_j-n_1n_2+(k-1)n_2.
\end{eqnarray*}
\end{thm}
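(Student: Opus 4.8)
I would first exhibit an extremal construction. Fix $A\subseteq V_1$ with $|A|=k-1$ (possible since $k\le n_1$), and let $G$ be obtained from $K_{n_1,\ldots,n_r}$ by deleting every edge between $V_1$ and $V_2$ that is not incident to $A$. Any transversal copy of $K_r$ uses exactly one edge between $V_1$ and $V_2$, which must therefore be incident to $A$; hence every such clique meets $A$. Since the $k$ cliques of a $kK_r$ are vertex-disjoint while $|A|=k-1$, the graph $G$ is $kK_r$-free. The number of deleted edges is $n_1n_2-(k-1)n_2$, so $|E(G)|=\sum_{1\le i<j\le r}n_in_j-n_1n_2+(k-1)n_2$, which matches the claimed value.

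\textbf{Reduction for the upper bound.} Rather than assuming $kK_r$-freeness, I would prove the stronger, hypothesis-free statement: for \emph{every} subgraph $G\subseteq K_{n_1,\ldots,n_r}$, writing $\nu=\nu(G)$ for the maximum number of vertex-disjoint transversal copies of $K_r$ in $G$, the number of missing edges (pairs from distinct classes absent from $G$) is at least $(n_1-\nu)n_2$. The theorem follows because $kK_r$-freeness gives $\nu\le k-1$, whence $|E(G)|\le\sum_{i<j}n_in_j-(n_1-(k-1))n_2$. This reformulation is cleaner: it drops the parameter $k$, it degenerates correctly (each clique occupies a distinct vertex of $V_1$, so $\nu\le n_1$ and the bound is trivial when $\nu=n_1$), and it is amenable to induction on $n_1$.

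\textbf{Base case $\nu=0$ via LP duality.} This is the heart, and here I have a clean argument. If $\nu(G)=0$ then the set $D$ of missing edges meets every transversal $K_r$ of the host $K:=K_{n_1,\ldots,n_r}$. Put the uniform weight $w=1/(n_3n_4\cdots n_r)$ on each of the $\prod_i n_i$ transversal cliques of $K$. An edge between $V_i$ and $V_j$ lies in $\prod_{\ell\ne i,j}n_\ell$ of them, so its total load is $n_1n_2/(n_in_j)\le 1$, with equality precisely on the $V_1$–$V_2$ edges, since $n_1n_2$ is the least pairwise product. As every clique is hit by $D$ and every edge carries load at most $1$, the total weight $\sum_C w=n_1n_2$ is at most $|D|$, giving $|D|\ge n_1n_2=(n_1-0)n_2$.

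\textbf{Inductive step and the main obstacle.} For $\nu\ge 1$ I would delete one vertex $v\in V_1$ and induct on $n_1$. The missing edges of $G$ are those of $G-v$ together with the $\bigl(\sum_{j\ge2}n_j\bigr)-\deg_G(v)$ missing edges at $v$, and deleting a vertex drops the packing number by at most one, so $\nu(G-v)\in\{\nu-1,\nu\}$. The induction closes immediately if $v$ is critical (i.e. $\nu(G-v)=\nu-1$), and also if $v$ misses at least $n_2$ edges. The one stubborn case is when \emph{every} vertex of $V_1$ is simultaneously non-critical and misses fewer than $n_2$ edges; here the vertex-by-vertex induction is too wasteful and a global argument is needed. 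The prototype is $r=2$, where $G=G_{12}$ is bipartite, $\nu(G)$ is its matching number, and K\"onig's theorem (the $r=2$ instance of Theorem~\ref{s1}) gives $e(G)\le\nu n_2$, i.e. at least $(n_1-\nu)n_2$ missing edges. I expect the genuine difficulty to be reducing the general case to this bipartite estimate: one must show that, because $n_3,\ldots,n_r\ge n_2\ge n_1\ge k$, a maximum family of vertex-disjoint transversal $K_r$'s can always be arranged so that its bottleneck lies between the two smallest classes, the remaining classes being large enough to complete any partial clique. Proving that the large classes never themselves form the obstruction is the step I anticipate to be the crux.
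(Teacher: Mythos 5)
Your lower-bound construction and your weighted covering argument for the case $\nu=0$ are both correct, but the upper bound has a genuine gap, and it is exactly where you say it is: the induction on $n_1$ does not close when every $v\in V_1$ is non-critical and misses fewer than $n_2$ edges, and this case is not vacuous for $\nu\geq 1$. For example, fix $W\subset V_3$ with $1\leq |W|\leq n_1-1$ and delete from $K_{n_1,\ldots,n_r}$ exactly the $V_3$--$V_4$ edges not meeting $W$. Every transversal copy of $K_r$ must use a vertex of $W$, so $\nu(G)=|W|$; every $v\in V_1$ misses zero edges, and $\nu(G-v)=\min\{|W|,n_1-1\}=|W|$, so every $v\in V_1$ is non-critical. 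Thus neither of your tractable cases ever applies, all missing edges lie between the two \emph{largest} classes, and the fallback you sketch (arranging the bottleneck of a maximum packing to lie between $V_1$ and $V_2$ so as to invoke K\"{o}nig via Theorem \ref{s1}) is precisely the statement you have not proved. As written, the argument proves the theorem only for graphs admitting a critical or sparse vertex in $V_1$, which is not a reduction of the general case.

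The repair is a strengthening of your own $\nu=0$ mechanism from a covering statement to a counting statement, and it is how this paper proceeds: Theorem \ref{s2} is not reproved here directly (it is cited from \cite{de2}), but it is the case $s=2$ of Theorem \ref{main-1}, proved in Section 3. The two ingredients are: (i) Lemma \ref{ma}, proved by partitioning the transversals of $K^{(r)}(V_1,\ldots,V_r)$ into $n_2\cdots n_r$ parallel classes of size $n_1$ by a cyclic construction, which gives that a $kK_r$-free $G$ contains at most $(k-1)n_2\cdots n_r$ transversal cliques --- not merely that the missing edges meet every transversal; and (ii) your weighting, run globally: with weight $1/(n_3\cdots n_r)$ per transversal of the host, the transversals that are cliques of $G$ carry total weight at most $(k-1)n_2$, so those containing a missing edge carry weight at least $n_1n_2-(k-1)n_2$, and since each missing edge has load at most $1$ this yields $|D|\geq (n_1-k+1)n_2$ in one stroke, with no induction on $n_1$ and no case analysis. (The paper phrases this same computation probabilistically, as $\mathbb{E}(X(T))$ for a uniform random transversal $T$, and does the bookkeeping via the linear program of Lemma \ref{op}, which is what is needed for general $s$; for $s=2$ it collapses to the inequality just stated.) So your instinct in the base case was the right one; the missing idea is the quantitative clique-count bound of Lemma \ref{ma}, which lets that weighting absorb all values of $\nu$ simultaneously.
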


In this paper, we consider three Tur\'{a}n-type problems for $k$ disjoint cliques in $r$-partite $s$-graphs. Let $n_1,n_2,\ldots,n_r$ be integers. {For any $A\subset [r]$}, denote $\prod_{i\in A}n_i$ by $n_A$. Define
\[
f_{k}^{(s)}(n_2,\ldots,n_r) = (k-1)\sum_{A:A\subset[2,r]\atop |A|=s-1}n_A,
\]
\[
g_{k}^{(s)}(n_1,n_2,\ldots,n_r) = \sum_{A:A\subset [r]\atop|A|=s} n_A -n_{[s]}+(k-1)n_{[2,s]},
\]
and
\[
h_{k}^{(s)}(n_1,n_2,\ldots,n_r) = \sum_{A:A\subset [r]\atop|A|=s,\{1,2\}\not\subset A} n_A +\sum_{A:A\subset [3,r]\atop|A|=s-2}(k-1)n_2 n_A.
\]

\begin{thm}\label{main-4}
For $2\leq s\leq r$, $k\geq1$ and $n_1\leq n_2\leq\cdots\leq n_r$, if $n_1\geq s^3k+sr$ for $s\leq r-2$; $n_1\geq s^3k^2+sr$ for $s=r-1$ and $n_1\geq k$ for $s=r$, then
{
\begin{eqnarray*}
ex_s(K_{n_1,n_2,\ldots,n_r}^{(s)}, kK_s^{(s)})=f_{k}^{(s)}(n_2,\ldots,n_r).
\end{eqnarray*}}
\end{thm}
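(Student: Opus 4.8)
The plan is to prove that the matching upper bound equals $f_k^{(s)}(n_2,\ldots,n_r)=(k-1)P$, where $P=\sum_{A\subset[2,r],|A|=s-1}n_A$ is exactly the degree of a vertex of $V_1$ in $K_{n_1,\ldots,n_r}^{(s)}$ and, since $V_1$ is a smallest class, the maximum degree of the whole host graph. The lower bound is immediate: fix a set $T\subset V_1$ of $k-1$ vertices and keep every edge meeting $T$. Each edge meets $V_1$ in at most one vertex, so the edges through $T$ are counted once each and number $(k-1)P$; moreover any matching uses the vertices of $T$ at distinct edges, so there is no $kK_s^{(s)}$. Thus $ex_s\ge f_k^{(s)}$, and it remains to prove the reverse inequality.

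For the upper bound I would induct on $k$. The case $k=1$ is trivial (a $1K_s^{(s)}$-free graph has no edge), so assume the statement for $k-1$ and let $H\subset K_{n_1,\ldots,n_r}^{(s)}$ be $kK_s^{(s)}$-free. If $\nu(H)\le k-2$ then $H$ is $(k-1)K_s^{(s)}$-free, and induction gives $|E(H)|\le f_{k-1}^{(s)}=(k-2)P\le(k-1)P$. Hence assume $\nu(H)=k-1$ and fix a maximum matching $M$ with vertex set $U=V(M)$, $|U|=s(k-1)$; by maximality every edge of $H$ meets $U$. Now let $v^\ast$ be a vertex of maximum degree $D=d_H(v^\ast)$ and split into two cases. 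If $v^\ast$ lies in every maximum matching, i.e. $\nu(H-v^\ast)\le k-2$, then $H-v^\ast$ is $(k-1)K_s^{(s)}$-free; applying the inductive bound to $H-v^\ast$ (whose host has one vertex fewer in one class, so its maximum degree is still at most $P$) gives $|E(H)|=d_H(v^\ast)+|E(H-v^\ast)|\le P+(k-2)P=(k-1)P$, as wanted. The threshold is used here only to guarantee that $H-v^\ast$ still satisfies the hypothesis for $k-1$, which is automatic since the stated thresholds increase in $k$.

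The remaining case, where $v^\ast$ misses some maximum matching $M'$, is the main obstacle. Here every edge through $v^\ast$ must meet $V(M')$ (otherwise it extends $M'$ to a matching of size $k$), so $d_H(v^\ast)$ is at most the number of host edges through $v^\ast$ and a second vertex of $V(M')$. The key estimate is that the number of edges through a fixed pair of vertices in two classes, $\sum_{B\subset[r]\setminus\{a,b\},|B|=s-2}n_B$, is at most $P/n_1$; summing over the at most $s(k-1)$ vertices of $V(M')$ yields $D\le s(k-1)P/n_1$. Since $D$ is the global maximum degree, every vertex of $U$ has degree at most $D$, and using that each edge meets $U$ we obtain $|E(H)|\le\sum_{u\in U}d_H(u)\le|U|\cdot D\le s^2(k-1)^2P/n_1$, which is at most $(k-1)P$ once $n_1$ is large enough. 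This is where the lower bound on $n_1$ is consumed. I expect the genuinely delicate points to be the per-pair degree estimate $\sum_{B}n_B\le P/n_1$ (which must be checked uniformly in the choice of classes, using that $V_1$ is smallest) and the bookkeeping when the essential vertex removed in the previous case does not lie in a smallest class, so that the reordering of the classes leaves the target $(k-1)P$ unchanged. Finally, I would treat $s=r$ separately by a deficiency/Hall-type argument on transversals, which should yield the sharper threshold $n_1\ge k$ claimed in that regime.
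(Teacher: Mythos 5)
Your argument for $2\le s\le r-1$ is correct and takes a genuinely different route from the paper. The paper first proves the $s=r$ case (its Lemma 2.1) by explicitly partitioning $E(K^{(r)}_{n_1,\ldots,n_r})$ into $n_2\cdots n_r$ matchings of size $n_1$ via modular shifts, then settles the balanced case $n_1=\cdots=n_r$ (Lemma 2.3) through a long stability analysis based on the shifting operator, and finally deduces the general statement by a double induction on $(s,\sum_{i\ge 2}(n_i-n_1))$, peeling off the last vertex of a largest class and bounding its link. Your single induction on $k$, with the dichotomy according to whether a maximum-degree vertex $v^\ast$ lies in every maximum matching, replaces all of this when $s\le r-1$: Case 1 (delete $v^\ast$ and induct) is sound because the host's maximum degree equals $P$ and the corresponding parameter $P'$ of the reduced host can only decrease, and Case 2 is sound because your key estimate $\sum_{B\subset[r]\setminus\{a,b\},\,|B|=s-2}n_B\le P/n_1$ is in fact true: it follows from the injection sending $B\mapsto B\cup\{a\}$ when $1\notin B$ and $B\mapsto (B\setminus\{1\})\cup\{a,b\}$ when $1\in B$, using $n_a n_b\ge n_1^2$. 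This yields $|E(H)|\le s^2(k-1)^2P/n_1\le (k-1)P$ under the stated thresholds (indeed under the weaker hypothesis $n_1\ge s^2(k-1)$), it avoids shifting entirely, and it is considerably shorter than the paper's treatment of these cases.

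The genuine gap is the case $s=r$, which the theorem asserts under the much weaker threshold $n_1\ge k$. Your machinery gives $s=r$ only when $n_1$ is on the order of $s^2k$, and you defer the sharp version to ``a deficiency/Hall-type argument on transversals'' without giving it. This case is not routine: it is the Aharoni--Howard result, equivalent to $ex_r(K^{(r)}_{n_1,\ldots,n_r},kK^{(r)}_r)=(k-1)n_2\cdots n_r$ for all $n_1\ge k$, and the paper proves it precisely by the matching decomposition above, so that any subgraph with more than $(k-1)n_2\cdots n_r$ edges meets some matching class of size $n_1$ in at least $k$ edges. A Hall or deficiency argument is not sketched and does not obviously produce a $k$-matching from the edge count alone, so until you supply a proof of this base case (or quote it), your proposal establishes the theorem only for $2\le s\le r-1$.
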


It should be mentioned that the problem in Theorem \ref{main-4} can be viewed as a multi-partite version of the Erd\H{o}s matching conjecture, which states that
\[
ex_s(K_n^{(s)},kK_s^{(s)})=\max\left\{\binom{ks-1}{s}, \binom{n}{s}-\binom{n-k+1}{s}\right\}
\]
and is still open when $n$ is close to $s(k-1)$, see \cite{bo2,erdos2,frankl0,frankl1,frankl2} for recent progress. The lower bound in Theorem \ref{main-4} follows from the following construction. Let $H_1$ be an $r$-partite $s$-graph on vertex classes $V_1,V_2,\ldots,V_r$ with sizes $n_1,n_2,\ldots,n_r$, respectively. Let $V_1'$ be a $(k-1)$-element subset of $V_1$. An edge $S$ of $K^{(s)}(V_1,V_2,\ldots,V_r)$ forms an edge of $H_1$ if and only if {$S\cap V_1'\neq \emptyset$}. It is easy to see that $H_1$ is $kK_s^{(s)}$-free. Otherwise, if $H_1$ has a matching of size $k$, then we have $|V_1'|\geq k$ since each edge of $H_1$ contains a vertex in $V_1'$.

As our second main result, we use a probabilistic argument to determine $ex_s(K_{n_1, \ldots, n_r}^{(s)},kK_r^{(s)})$.
\begin{thm}\label{main-1}
For $2\leq s\leq r$, $n_1\leq n_2\leq \cdots\leq n_r$ and $k\leq n_1$,
\[
ex_s\left(K_{n_1, \ldots, n_r}^{(s)},kK_r^{(s)}\right)=g_{k}^{(s)}(n_1,n_2,\ldots,n_r).
\]
\end{thm}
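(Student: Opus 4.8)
The plan is to establish matching lower and upper bounds for $ex_s(K_{n_1,\ldots,n_r}^{(s)},kK_r^{(s)})$; the lower bound is a direct construction, while the upper bound is carried entirely by a short probabilistic argument. Throughout I call the set of $s$-subsets meeting the parts indexed by an $s$-set $A\subseteq[r]$ in exactly one vertex each the \emph{column} $A$; the complete $r$-partite $s$-graph has exactly $n_A$ potential edges in column $A$, and since $n_1\le\cdots\le n_r$ the smallest column is $[s]=\{1,\dots,s\}$, with $n_A\ge n_{[s]}$ for every $s$-set $A$.

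For the lower bound I would fix a $(k-1)$-element set $V_1'\subseteq V_1$ and let $H_0$ consist of every edge of $K^{(s)}(V_1,\ldots,V_r)$ except those edges of the column $[s]$ whose vertex in $V_1$ lies outside $V_1'$. Every copy of $K_r^{(s)}$ selects one vertex from each part and in particular contains the edge on parts $\{1,\dots,s\}$, which in $H_0$ forces its $V_1$-vertex into $V_1'$; as $|V_1'|=k-1$ and $k$ disjoint cliques would require $k$ distinct such vertices, $H_0$ is $kK_r^{(s)}$-free. Counting shows the column $[s]$ retains $(k-1)n_{[2,s]}$ of its $n_{[s]}$ edges, so $|E(H_0)|=\sum_{A\subseteq[r],\,|A|=s}n_A-n_{[s]}+(k-1)n_{[2,s]}=g_k^{(s)}(n_1,\ldots,n_r)$.

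For the upper bound, let $H$ be any $kK_r^{(s)}$-free subgraph and let $m_A$ be the number of non-edges of $H$ in column $A$, so $|E(H)|=\sum_{|A|=s}n_A-\sum_{|A|=s}m_A$. I would sample $n_1$ pairwise vertex-disjoint transversals at random: choose independent uniformly random injections $\phi_i\colon[n_1]\hookrightarrow V_i$ for $i\in[r]$ (possible since $n_1\le n_i$), and set $T^j=\{\phi_1(j),\ldots,\phi_r(j)\}$ for $j\in[n_1]$. The $T^j$ are pairwise disjoint, and $T^j$ is a copy of $K_r^{(s)}$ precisely when all $\binom{r}{s}$ of its $s$-subsets are edges of $H$. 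For fixed $j$ and column $A$ the tuple $(\phi_i(j))_{i\in A}$ is uniform on $\prod_{i\in A}V_i$, because the $\phi_i$ are independent and each is marginally uniform; hence a union bound over columns gives $\Pr[T^j\text{ is not a clique}]\le\sum_{|A|=s}m_A/n_A$. By linearity of expectation the expected number of indices $j$ with $T^j$ a clique is at least $n_1\bigl(1-\sum_{|A|=s}m_A/n_A\bigr)$; on the other hand, in every outcome the cliques among $T^1,\ldots,T^{n_1}$ are pairwise disjoint copies of $K_r^{(s)}$ in $H$ and so number at most $k-1$, whence that expectation is $\le k-1$. Combining the two bounds yields $n_1\bigl(1-\sum_A m_A/n_A\bigr)\le k-1$, and using $n_A\ge n_{[s]}$ to write $\sum_A m_A/n_A\le\bigl(\sum_A m_A\bigr)/n_{[s]}$ and simplifying via $n_{[s]}(n_1-k+1)/n_1=n_{[s]}-(k-1)n_{[2,s]}$ gives $\sum_A m_A\ge n_{[s]}-(k-1)n_{[2,s]}$, i.e.\ $|E(H)|\le g_k^{(s)}(n_1,\ldots,n_r)$.

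The genuine content lives in the upper bound, and the step I would watch most carefully is the probabilistic one: confirming that independent random injections make each column-restriction $(\phi_i(j))_{i\in A}$ exactly uniform on $\prod_{i\in A}V_i$, so that the clique-failure probability is at most $\sum_A m_A/n_A$, together with the clean structural observation that disjoint transversals which happen to be cliques constitute a genuine copy of $kK_r^{(s)}$. The remaining ingredients—the inequality $n_A\ge n_{[s]}$ coming from the size ordering, and the final algebraic simplification—are routine, and the hypothesis $k\le n_1$ is exactly what makes the threshold $n_1-k+1$ positive while ensuring that $n_1\ge k$ disjoint transversals are available to sample.
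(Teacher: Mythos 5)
Your proof is correct, and your lower-bound construction is the same as the paper's (your $(k-1)$-set $V_1'$ is just the complement of the paper's $(n_1-k+1)$-set). The upper bound, while probabilistic in the same spirit as the paper's, is genuinely more self-contained. The paper samples a single uniform transversal $T$, computes the expected number of edges of $H[T]$ column by column, and then needs two auxiliary results: Lemma \ref{ma}, applied to an auxiliary $r$-partite $r$-graph $H^*$ whose edges are the vertex sets of $r$-cliques of $H$, to bound the probability that $H[T]$ is a clique by $(k-1)/n_1$; and the linear-programming Lemma \ref{op} to pass from the resulting inequality \eqref{eq-op1} to $e(H)\le g_k^{(s)}(n_1,\ldots,n_r)$. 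Your coupling of $n_1$ pairwise disjoint random transversals removes the first dependency: disjoint transversals that happen to be cliques are vertex-disjoint copies of $K_r^{(s)}$, so $kK_r^{(s)}$-freeness caps their number at $k-1$ in every outcome, and averaging gives the same probability bound $(k-1)/n_1$ directly --- in effect you reprove probabilistically exactly what the paper extracts from the matching decomposition behind Lemma \ref{ma}. Your non-edge bookkeeping removes the second dependency: your constraint $\sum_A m_A/n_A\ge (n_1-k+1)/n_1$ is precisely inequality \eqref{eq-op1} in complementary form, and you solve the resulting optimization by hand using only the monotonicity $n_A\ge n_{[s]}$, which is all of the generality of Lemma \ref{op} that this theorem actually needs. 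What the paper's factorization buys is reuse: Lemma \ref{ma} also serves as the base case of Theorem \ref{main-4}, and both lemmas are invoked again in the proof of Theorem \ref{main-2}; what your route buys is a shorter argument for Theorem \ref{main-1} alone, with no external lemmas at all.
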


The lower bound in Theorem \ref{main-1} follows from the following construction. Let $H_2$ be an $r$-partite $s$-graph on vertex classes $V_1,V_2,\ldots,V_r$ with sizes $n_1,n_2,\ldots,n_r$, respectively.  Let $V_1'$ be an $(n_1-k+1)$-element subset of $V_1$ and let $H_2$ be obtained by deleting all the edges of $K^{(s)}(V_1', V_2, \ldots, V_s)$ from $K^{(s)}(V_1,V_2,\ldots,V_r)$. It is easy to see that $H_2$ is $kK_r^{(s)}$-free. Otherwise, if there are $k$ vertex-disjoint copies of $K_r^{(s)}$ in $H_2$, then we have $|V_1\setminus V_1'|\geq k$ since each copy of $K_r^{(s)}$ in $H_2$ contains a vertex in $V_1\setminus V_1'$.

We also consider the generalized Tur\'{a}n problem in multi-partite graphs. Let $ex(G,T,F)$ denote the maximum number of copies of $T$ in  an $F$-free subgraph of $G$. The first result of this type is due to Zykov \cite{zykov}, who showed that the Tur\'{a}n graph also maximises the number of $s$-cliques in an $n$-vertex $K_{t+1}$-free graph for $s\leq t$. Recently, Alon and Shikhelman \cite{alon} determined $ex(K_n,K_s,F)$ asymptotically for any $F$ with chromatic number $\chi(F)=t+1>s$. Precisely, they proved that
\[
ex(K_n,K_s,F) =k_s(T_{n,t})+o(n^s),
\]
where $k_s(T_{n,t})$ denotes the number of $s$-cliques in the Tur\'{a}n graph $T_{n,t}$.
Later, the error term of this result was further improved by Ma and Qiu \cite{maqiu}.

In this paper, we also study the maximum number of $s$-cliques in a $kK_r$-free subgraph of $K_{n_1, \ldots, n_r}$. By the same probabilistic argument as in the proof of Theorem \ref{main-1}, we obtain the following result.
\begin{thm}\label{main-2}
For $2\leq s\leq r$, $n_1\leq n_2\leq n_3 $ and $k\leq n_1$,
\[
ex(K_{n_1, n_2,\underbrace{n_3,\ldots,n_3}_{r-2}},K_s, kK_r)=h_{k}^{(s)}(n_1,n_2,\underbrace{n_3,\ldots,n_3}_{r-2}).
\]
\end{thm}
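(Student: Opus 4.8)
The plan is to reuse the deletion construction behind Theorem~\ref{main-1}. I would take $V_1'\subset V_1$ with $|V_1'|=n_1-k+1$ and let $H$ be $K_{n_1,n_2,n_3,\ldots,n_3}$ with every edge between $V_1'$ and $V_2$ removed. Since any copy of $K_r$ uses exactly one edge between $V_1$ and $V_2$, and such an edge can only meet the $k-1$ vertices of $V_1\setminus V_1'$, the graph $H$ has no $k$ vertex-disjoint copies of $K_r$. Counting $K_s$'s by the set $A$ of classes they occupy, those with $\{1,2\}\not\subset A$ are untouched and contribute $\sum_{\{1,2\}\not\subset A}n_A$, while those with $\{1,2\}\subset A$ survive only when their part-$1$ vertex lies in $V_1\setminus V_1'$, contributing $(k-1)n_2\sum_{A':A'\subset[3,r],|A'|=s-2}n_{A'}$. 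This is exactly $h_k^{(s)}$.

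\textbf{The upper bound.} For the matching bound I would run the same diagonal/probabilistic argument as in Theorem~\ref{main-1}. Fix a $kK_r$-free subgraph $H$, choose independent uniformly random orderings of each class $V_i$, and for $1\le j\le n_1$ let $T_j$ be the transversal consisting of the $j$-th vertex of every class; write $T_j[A]$ for the $s$ vertices of $T_j$ lying in the classes indexed by $A$. The sets $T_1,\ldots,T_{n_1}$ are pairwise vertex-disjoint $r$-sets, so at most $k-1$ of them can induce a $K_r$ in $H$; hence at least $n_1-k+1$ of them are non-cliques, \emph{whatever} the random orderings.

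\textbf{The key multiplicity step} is where the argument departs from Theorem~\ref{main-1}, and is the crux. For each $j$ I would bound the number $\overline{W}_j$ of sets $A$ with $|A|=s$ for which $T_j[A]$ is \emph{not} a $K_s$. The point is that a single missing edge of $T_j$, say between classes $p$ and $q$, already destroys $T_j[A]$ for every $A\supset\{p,q\}$ with $|A|=s$, and there are $\binom{r-2}{s-2}$ such $A$. Thus every non-clique $T_j$ has $\overline{W}_j\ge\binom{r-2}{s-2}$, giving the deterministic estimate $\sum_{j=1}^{n_1}\overline{W}_j\ge\binom{r-2}{s-2}(n_1-k+1)$. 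Taking expectations and using $\Pr[D\subset T_j]=1/n_{A(D)}$ for a fixed non-$K_s$ transversal $D$ on classes $A(D)$, the left-hand side has expectation $\sum_D n_1/n_{A(D)}$, where $D$ runs over all transversal $s$-sets of $H$ that are not $K_s$'s. Because $n_3=\cdots=n_r$, the minimum of $n_{A(D)}$ over $|A(D)|=s$ is $n_1n_2n_3^{s-2}$, so $1/n_{A(D)}\le 1/(n_1n_2n_3^{s-2})$ for every $D$; converting this weighted inequality into a plain count shows that the number of missing $K_s$'s is at least $\binom{r-2}{s-2}(n_1-k+1)\,n_2n_3^{s-2}$. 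Subtracting this from the total number $\sum_{|A|=s}n_A$ of transversal $s$-sets and using the elementary identity $\sum_{|A|=s}n_A=\sum_{\{1,2\}\not\subset A}n_A+\binom{r-2}{s-2}n_1n_2n_3^{s-2}$ yields precisely that the number of $K_s$'s is at most $h_k^{(s)}$.

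\textbf{Main obstacle.} The delicate point is the factor $\binom{r-2}{s-2}$. The crude estimate ``one missing $K_s$ per non-clique transversal'' loses exactly this factor and only reproves the weaker bound $g_k^{(s)}$, which is what Theorem~\ref{main-1} gives when applied directly to the $s$-uniform ``$K_s$-hypergraph'' of $H$ (whose edges are the $K_s$'s of $H$, and which is $kK_r^{(s)}$-free precisely because $H$ is $kK_r$-free). The multiplicity observation above recovers the missing factor, but it is also the reason the hypothesis $n_3=\cdots=n_r$ cannot be dropped here: when the larger classes have unequal sizes the weights $1/n_{A(D)}$ are no longer all controlled by the single value $1/(n_1n_2n_3^{s-2})$, the weighted-to-unweighted conversion leaks, and one is forced to replace the averaging by the rainbow-matching machinery used in the paper's final theorem.
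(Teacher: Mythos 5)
Your proof is correct, and it reaches the paper's key intermediate inequality by a genuinely different, more self-contained route. Both arguments hinge on the same multiplicity observation---one missing edge of a transversal $r$-set destroys $\binom{r-2}{s-2}$ of its $\binom{r}{s}$ potential $s$-cliques, which the paper phrases as ``a non-complete graph on $r$ vertices has at most $\binom{r}{s}-\binom{r-2}{s-2}$ $s$-cliques''---and after dividing your expectation bound by $n_1$, your weighted inequality $\sum_{A}k_s(V_A)/n_A\le \binom{r}{s}-\binom{r-2}{s-2}+\frac{k-1}{n_1}\binom{r-2}{s-2}$ is literally the paper's \eqref{ineq-op2}. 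The differences lie in how that constraint is produced and then consumed. The paper samples a single uniform transversal $T$ and bounds $\Pr(A_T)\le (k-1)/n_1$ by passing to the auxiliary $r$-partite $r$-graph whose edges are the $K_r$'s of $G$ and invoking Lemma \ref{ma} (proved there via an explicit partition of $K^{(r)}_{n_1,\ldots,n_r}$ into transversal matchings); your $n_1$ pairwise disjoint ``diagonal'' transversals make the count ``at most $k-1$ cliques among the $T_j$'' deterministic, bypassing Lemma \ref{ma} entirely (your argument in fact reproves its upper bound probabilistically, and it also absorbs the case $s=r$, which the paper dispatches by a separate remark). At the consumption end, the paper feeds \eqref{ineq-op2} into the linear programming Lemma \ref{op}, whereas you convert the weighted slack directly via $\sum_A\bigl(n_A-k_s(V_A)\bigr)\ge n_1n_2n_3^{s-2}\sum_A \bigl(n_A-k_s(V_A)\bigr)/n_A$, valid since every summand is nonnegative and $n_A\ge n_1n_2n_3^{s-2}$; this is exactly the specialization of Lemma \ref{op} to the situation where all $\binom{r-2}{s-2}$ sets $A\supset\{1,2\}$ attain the minimum weight, which is precisely where $n_3=\cdots=n_r$ enters in both proofs. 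Your closing diagnosis---that with unequal large classes the weighted-to-plain conversion leaks and the paper must switch to the rainbow-matching argument of Theorem \ref{main-3}---is accurate, your side remark that the crude ``one missing clique per bad transversal'' estimate only recovers $g_k^{(s)}$ via Theorem \ref{main-1} is also correct, and your lower-bound construction coincides with the paper's.
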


Note that for $r=3$, $s\leq 3$ and arbitrary $n_1,n_2,n_3$, the Tur\'{a}n number $ex\left(K_{n_1, n_2, n_3},K_s, kK_3\right)$ is determined by Theorem \ref{main-2}.
Utilizing a result on rainbow matchings due to Glebov, Sudakov and Szab\'{o} \cite{glebov}, we also  determine $ex(K_{n_1, \ldots, n_r},K_s, kK_r)$ for $r\geq 4$ and $n_4$ sufficiently larger than $k$.
\begin{thm}\label{main-3}
For $r\geq 4$, $2\leq s\leq r$, $n_1\leq n_2\leq  \cdots \leq n_r$ and $k\leq n_1$, if $n_4\geq r^r(k-1)k^{2r-2}$, then
\[
ex\left(K_{n_1, \ldots, n_r},K_s, kK_r\right)=h_{k}^{(s)}(n_1,n_2,\ldots,n_r).
\]
\end{thm}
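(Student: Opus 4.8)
The plan is to prove the lower bound by the construction sketched for Theorem~\ref{main-1}, specialized to graphs: take $V_1'\subset V_1$ with $|V_1'|=n_1-k+1$ and delete from $K_{n_1,\ldots,n_r}$ all edges between $V_1'$ and $V_2$. Any $K_r$ uses exactly one vertex per part, and its part-$1$ and part-$2$ vertices must be adjacent, so its part-$1$ vertex lies in the $(k-1)$-set $V_1\setminus V_1'$; hence the resulting graph $G_0$ is $kK_r$-free. A copy of $K_s$ not meeting both $V_1$ and $V_2$ survives unconditionally, contributing $\sum_{A\subset[r],|A|=s,\{1,2\}\not\subset A}n_A$, whereas a copy meeting both must have its part-$1$ vertex in $V_1\setminus V_1'$, contributing $(k-1)n_2N$ with $N=\sum_{B\subset[3,r],|B|=s-2}n_B$. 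Thus $G_0$ realizes $h_k^{(s)}(n_1,\ldots,n_r)=\sum_{A\subset[r],|A|=s,\{1,2\}\not\subset A}n_A+(k-1)n_2N$, giving the lower bound.

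For the upper bound let $G\subseteq K_{n_1,\ldots,n_r}$ be $kK_r$-free, and suppose for contradiction that $G$ has more than $h_k^{(s)}$ copies of $K_s$. Since every $K_s$ not meeting both $V_1$ and $V_2$ is accounted for by the first sum, the excess forces the number of copies of $K_s$ using one vertex $u\in V_1$, one vertex $v\in V_2$ and $s-2$ vertices in $V_3\cup\cdots\cup V_r$ to exceed $(k-1)n_2N$. Writing $w(uv)$ for the number of such copies through the base edge $uv$, we have $\sum_{uv}w(uv)>(k-1)n_2N$ and $w(uv)\le N$. First I would run a K\"{o}nig-type argument on the bipartite graph $B$ of base edges $uv$ with $w(uv)\ge 1$: if $B$ had a vertex cover $C$ with $|C|\le k-1$, then, bounding the contribution of a cover vertex in $V_1$ by $n_2N$ and one in $V_2$ by $n_1N$ and using $n_1\le n_2$, we would get $\sum_{uv}w(uv)\le(k-1)n_2N$, a contradiction. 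Hence $B$ has a matching of size $k$, so there are $k$ vertex-disjoint base edges $u_1v_1,\ldots,u_kv_k$, each lying in a copy of $K_s$ through parts $1$ and $2$.

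It remains to promote these $k$ disjoint base edges to $k$ pairwise vertex-disjoint copies of $K_r$, which is where the hypothesis $n_4\ge r^r(k-1)k^{2r-2}$ and the rainbow-matching machinery enter. The idea is to build the $r$-cliques part by part. The bases $u_iv_i$ are already disjoint; part $3$ is potentially small (only $n_3\ge k$ is guaranteed), so the part-$3$ vertices of the $k$ cliques must be chosen as a system of distinct, mutually compatible representatives, which I would extract as a rainbow matching via the theorem of Glebov, Sudakov and Szab\'{o} \cite{glebov}, whose hypotheses are met because each base edge extends within a family of candidate cliques that is large relative to $k$. The remaining parts $4,\ldots,r$ are enormous, so once the first three coordinates are fixed I would extend greedily: at each of the parts $4,\ldots,r$ every partially built clique still has many admissible vertices, and since at most $r(k-1)$ vertices are already used, distinct compatible vertices can be assigned to all $k$ cliques; iterating over the large parts is exactly what produces the bound $r^r(k-1)k^{2r-2}$ on $n_4$. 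The resulting $k$ disjoint copies of $K_r$ contradict $kK_r$-freeness.

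The main obstacle is this last step: turning $k$ disjoint \emph{base edges} (or, for $s<r$, $k$ disjoint partial $s$-cliques) into $k$ disjoint \emph{full $r$-cliques}. The difficulty is twofold. First, for $s<r$ a copy of $K_s$ through parts $1,2$ need not extend to a $K_r$ at all, so I would need to argue that the abundance guaranteed by the counting excess, together with the large parts $4,\ldots,r$, leaves enough \emph{extendable} bases to survive the matching step. Second, even granting extendability, the $k$ extensions compete for vertices in the small parts (part $3$ in particular), and resolving this competition is precisely a rainbow-matching problem; quantifying how large $n_4$ must be to guarantee the rainbow matching through the iterated greedy extension is the delicate, calculation-heavy core of the argument. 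The case $n_3=\cdots=n_r$ (Theorem~\ref{main-2}) sidesteps this through a probabilistic averaging that symmetrizes the large parts; the content of Theorem~\ref{main-3} is handling genuinely unequal part sizes, for which the rainbow matching of \cite{glebov} is the right tool.
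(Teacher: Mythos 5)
Your lower-bound construction is correct and is exactly the paper's. The upper bound, however, has a fatal structural gap at precisely the point you flag as "the main obstacle," and the gap is not a missing calculation but a wrong reduction. After your K\"{o}nig step (which is itself fine), the only facts you retain about $G$ are: (i) there exist $k$ pairwise disjoint edges $u_iv_i$ between $V_1$ and $V_2$, each lying in some copy of $K_s$, and (ii) the parts $V_4,\ldots,V_r$ are large. For $s\leq r-1$ these facts cannot imply the existence of even one copy of $K_r$, let alone $k$ disjoint ones. Concretely, let $G$ be $K_{n_1,\ldots,n_r}$ with \emph{all} edges between $V_3$ and $V_4$ deleted, all parts equal and large. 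Then $G$ contains no $K_r$ whatsoever (every $K_r$ would need a $V_3$--$V_4$ edge, as $r\geq 4$), so $G$ is $kK_r$-free for every $k$; yet the number of copies of $K_s$ meeting both $V_1$ and $V_2$ is $n_1n_2\sum_{B\subset[3,r],\,|B|=s-2,\,\{3,4\}\not\subset B}\,n_B$, which vastly exceeds $(k-1)n_2N$. Your argument, applied to this $G$, would produce $k$ disjoint base edges and then assert a promotion to $k$ disjoint copies of $K_r$ --- which do not exist. (This $G$ does not contradict the theorem: it loses all $K_s$-copies through parts $3$ and $4$ simultaneously, so its total count stays below $h_k^{(s)}$. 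But it does contradict the intermediate statement your proof actually relies on.) The root cause is that by discarding the copies avoiding $\{V_1,V_2\}$ via the trivial bound $\sum_{\{1,2\}\not\subset A}n_A$, you reduce the theorem to a strictly weaker premise than its hypothesis; no amount of rainbow-matching or greedy extension can recover from that, because the premise is satisfiable by $K_r$-free graphs.

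For contrast, the paper never isolates the copies through parts $1,2$. It inducts on $\left(s,\sum_{i\geq 4}(n_i-n_3)\right)$, with base cases Theorem \ref{s2} ($s=2$) and Theorem \ref{main-2} ($n_3=\cdots=n_r$, proved by the probabilistic/LP averaging --- exactly the mechanism that keeps the counts $k_s(V_A)$ over \emph{all} part-sets $A$ coupled). In the inductive step it takes the largest part $V_r$ and, for $u\in V_r$, looks at the link graph $G(u)$: either some $G(u)$ is $kK_{r-1}$-free, and induction on $s$ (for $k_s(u,G)=k_{s-1}(G(u))$) plus induction on the size excess (for $G\setminus\{u\}$) finishes; or \emph{every} $u\in V_r$ has $k$ disjoint \emph{full} copies of $K_{r-1}$ in its link, in which case these $(r-1)$-cliques, colored by their apex $u$, form an $(n_r,k)$-colored $(r-1)$-partite $(r-1)$-graph, and $n_r\geq n_4>f(r-1,k)$ lets Theorem \ref{le2} produce a rainbow $k$-matching, i.e.\ $k$ disjoint copies of $K_r$, a contradiction. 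Note that the objects fed into the rainbow-matching step are already complete $(r-1)$-cliques, so the extendability problem you could not resolve never arises; and the threshold $r^r(k-1)k^{2r-2}$ is simply the Glebov--Sudakov--Szab\'{o} bound on $f(r-1,k)$, not the outcome of an iterated greedy extension.
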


The lower bounds in Theorems \ref{main-2} and \ref{main-3} follow from the same construction as follows. Let $G$ be an $r$-partite graph on $V_1,V_2,\ldots,V_r$, which are of sizes $n_1,n_2,\ldots,n_r$, respectively.  Let $V_1'$ be an $(n_1-k+1)$-element subset of $V_1$. Then $G$ is obtained by deleting all the edges of $K(V_1', V_2)$ from $K(V_1,V_2,\ldots,V_r)$. It is easy to see that $G$ is $kK_r$-free. Otherwise, if there are $k$ vertex-disjoint copies of $K_r$ in $G$, then we have $|V_1\setminus V_1'|\geq k$ since each copy of $K_r$ in $G$ contains a vertex in $V_1\setminus V_1'$.

The rest of the paper is organized as follows. We will prove
Theorem \ref{main-4} in Section 2. In Section 3, we prove Theorem \ref{main-1}. In Section 4, we prove Theorems \ref{main-2} and
\ref{main-3}.

\section{Tur\'{a}n number of $kK_s^{(s)}$ in $r$-partite $s$-graphs}
In this section, we prove Theorem \ref{main-4}. First, we consider the case $s=r$, which is the base case for other results in this paper. Aharoni and Howard \cite{AH17} determined the maximum number of edges in a balanced $r$-partite $r$-graph that is $kK_r^{(r)}$-free. By the same argument, we prove the following result:

\begin{lem}\label{ma}
For any integers $1\leq k\leq n_1\leq n_2\leq\cdots\leq n_r$,
$$ex_r(K^{(r)}_{n_1, \ldots, n_r}, kK_r^{(r)})=(k-1)n_2\cdots n_r.$$
\end{lem}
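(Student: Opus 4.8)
The plan is to first translate the statement into a pure matching problem. Since every edge of $K^{(r)}_{n_1,\ldots,n_r}$ meets each class $V_i$ in exactly one vertex, an edge is just a transversal $(a_1,\ldots,a_r)$ with $a_i\in V_i$, and a copy of $K_r^{(r)}$ is a single such edge. Hence $kK_r^{(r)}$ is precisely a matching of size $k$, and $ex_r(K^{(r)}_{n_1,\ldots,n_r},kK_r^{(r)})$ is the maximum number of edges of a subgraph $H$ with matching number $\nu(H)\le k-1$. The lower bound is the construction $H_1$ already recorded (with $s=r$): taking all transversals whose first coordinate lies in a fixed $(k-1)$-subset $V_1'\subseteq V_1$ gives $(k-1)n_2\cdots n_r$ edges and matching number at most $k-1$, since any matching uses distinct first coordinates, all lying in $V_1'$. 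Here the hypothesis $k\le n_1$ is what makes $V_1'$ a legitimate subset.

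For the upper bound I would decompose the \emph{entire} edge set of $K^{(r)}_{n_1,\ldots,n_r}$ into few matchings and then argue colour-class by colour-class, exactly as in the balanced decomposition of Aharoni and Howard. Concretely, identify each $V_i$ with $\mathbb{Z}_{n_i}=\{0,1,\ldots,n_i-1\}$ and colour the edge $(a_1,\ldots,a_r)$ by
\[
\phi(a_1,\ldots,a_r)=\bigl(\,a_2-a_1,\ a_3-a_1,\ \ldots,\ a_r-a_1\,\bigr),
\]
where the $i$-th entry is reduced modulo $n_i$. This uses exactly $\prod_{i=2}^{r}n_i$ colours. The crucial step is to verify that any two distinct edges of the same colour differ in every coordinate, so that each colour class is a matching of $K^{(r)}_{n_1,\ldots,n_r}$. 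If two such edges agreed in coordinate $1$, then equality of all the differences $a_i-a_1$ would force them to agree in every coordinate, a contradiction; and if they agreed in some coordinate $i\ge 2$, then $a_i-a_1\equiv b_i-b_1\pmod{n_i}$ would give $a_1\equiv b_1\pmod{n_i}$, which, because $a_1,b_1\in\{0,\ldots,n_1-1\}$ and $n_1\le n_i$, forces $a_1=b_1$ and reduces to the previous case.

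With this decomposition in hand the upper bound is immediate: if $H\subseteq K^{(r)}_{n_1,\ldots,n_r}$ is $kK_r^{(r)}$-free, then each of the $\prod_{i=2}^{r}n_i$ colour classes, being a matching of $K^{(r)}_{n_1,\ldots,n_r}$, can contain at most $k-1$ edges of $H$; otherwise $k$ of its edges would be pairwise disjoint and form a copy of $kK_r^{(r)}$ in $H$. Summing over all colour classes yields $|E(H)|\le(k-1)\prod_{i=2}^{r}n_i$, which matches the lower bound and proves the lemma.

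I expect the only genuine obstacle to be producing the decomposition of $K^{(r)}_{n_1,\ldots,n_r}$ into $\prod_{i=2}^{r}n_i$ matchings; once the ``base-point'' colouring $\phi$ above is written down, checking the discordance property is a short modular computation, and it is precisely there that the assumption $n_1=\min_i n_i$ enters. Everything else is routine bookkeeping.
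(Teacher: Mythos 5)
Your proposal is correct and is essentially the paper's own argument: your colouring $\phi(a_1,\ldots,a_r)=(a_2-a_1,\ldots,a_r-a_1)$ (mod $n_i$ coordinatewise) produces exactly the paper's partition of $E(K^{(r)}_{n_1,\ldots,n_r})$ into $n_2\cdots n_r$ matchings indexed by $(x_2,\ldots,x_r)$, your modular check that same-coloured edges are pairwise disjoint (using $n_1\le n_i$) matches the paper's verification, and the pigeonhole count over colour classes and the lower-bound construction $K^{(r)}_{k-1,n_2,\ldots,n_r}$ coincide as well.
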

\begin{proof}
We shall partition the edge set of $K^{(r)}(V_1,\ldots,V_r)$ into $n_2n_3\cdots n_r$ matchings of size $n_1$ each. Let $V_i=\{v_{i,0},v_{i,1},\ldots, v_{i, n_i-1}\}$ for $i=1, 2,\ldots, r$ and
 \[
\Lambda=[0, n_2-1]\times[0, n_3-1]\times\cdots\times [0,n_r-1].
 \]
 For any $(r-1)$-tuple $(x_2, x_3, \ldots, x_r)\in \Lambda$,
define
$$E(x_2, x_3, \ldots, x_r)=\left\{\{v_{1, x}, v_{2,(x+x_2)\bmod {n_2}}, \ldots, v_{r,(x+x_r)\bmod {n_r}}\}:x\in[0,n_1-1]\right\}.$$
It is easy to see that  $E(x_2,x_3, \ldots, x_r)$ is a matching of size $n_1$. Moreover, let
\[
\Omega = \left\{E(x_2, x_3, \ldots, x_r)\colon (x_2, x_3, \ldots, x_r)\in \Lambda \right\}.
\]
We shall show that $\Omega$ forms a partition of the edge set of $K^{(r)}(V_1,\ldots,V_r)$.
On one hand, let  $e=\{v_{1, x_1}, v_{2,x_2}, \ldots, v_{r,x_r}\}$ be an edge in $K^{(r)}(V_1,\ldots,V_r)$ with $v_{i,x_i}\in V_i$ for each $i=1,2,\ldots,r$. Define
\[
y_i :\equiv (x_i-x_1) \bmod n_i
\]
for each $i=2,\ldots,r$. It is easy to check that $e\in E(y_2, y_3, \ldots, y_r)$. Moreover, for each $(x_2, x_3, \ldots, x_r)\in \Lambda$, $E(x_2, x_3, \ldots, x_r)\subset E(K^{(r)}(V_1,\ldots,V_r))$ holds. Thus, we have
\[
E(K^{(r)}(V_1,\ldots,V_r)) = \bigcup_{(x_2, x_3, \ldots, x_r)\in \Lambda} E(x_2, x_3, \ldots, x_r).
\]
On the other hand, for any  two different tuples $(y_2, y_3, \ldots, y_r),(z_2, z_3, \ldots, z_r)\in \Lambda$, we claim that $E(y_2, y_3, \ldots, y_r)\cap E(z_2, z_3, \ldots, z_r)=\emptyset$. Otherwise
if there exists $\{v_{1, x_1}, v_{2,x_2}, \ldots, v_{r,x_r}\}$ $\in E(y_2, y_3, \ldots, y_r)\cap E(z_2, z_3, \ldots, z_r)$, then we have
\[
x_i \equiv (x_1+y_i) \bmod {n_i} \equiv (x_1+z_i)\bmod {n_i}
\]
for all $i=2,\ldots,r$. It follows that $y_i\equiv z_i \bmod {n_i}$. Since $y_i,z_i\in \{0,1,\ldots,n_i-1\}$, we obtain $y_i=z_i$ for all $i=2,\ldots,r$, a contradiction.  Therefore, $\Omega$ forms a partition of the edge set of $K^{(r)}(V_1,\ldots,V_r)$.

Assume that $H\subseteq K^{(r)}(V_1,\ldots,V_r)$  and $e(H)\geq (k-1)n_2\cdots n_r+1$. Then the partition
\[
\left\{E(H)\cap E(x_2,x_3, \ldots, x_r) \colon (x_2, x_3, \ldots, x_r)\in \Lambda \right\}
 \]
of $E(H)$ shows that at least one of the matchings $E(H)\cap E(x_2,x_3, \ldots, x_r)$ has size $k$ or more, a contradiction.

For the lower bound, $K^{(r)}_{k-1,n_2,\ldots, n_r}$ is a $kK^{(r)}_r$-free $r$-graph with $(k-1)n_2\cdots n_r$ edges. Thus, we conclude that $ex_r(K^{(r)}_{n_1, \ldots, n_r},kK^{(r)}_r)=(k-1)n_2\cdots n_r$.
\end{proof}

Let $H$ be an $s$-graph. For $u,v\in V(H)$ and $e\in E(H)$, we define a {\it shifting} operator $S_{uv}$ on $e$ as follows:
\[
S_{uv}(e) =\left\{\begin{array}{ll}
\left(e\setminus \{v\}\right)\cup \{u\},\ \mbox{if} \ v\in e,\ u\notin e\mbox{ and }\left(e\setminus\{v\}\right)\cup \{u\} \notin E(H),\\
e,\ \mbox{otherwise}.
\end{array}\right.
\]
Define $S_{uv}(H)$ be the $s$-graph with vertex set $V(H)$ and edge set $\{S_{uv}(e)\colon e\in E(H)\}$.

It is easy to see that $e(S_{uv}(H))=e(H)$. Let $\nu(H)$ denote the size of a largest matching in $H$.  Frankl \cite{fra-shift} showed that {applying the shifting operator to $H$} does not increase $\nu(H)$. For the completeness we also include a short proof of this.
\begin{lem}\cite{fra-shift}\label{matching}
Let $H$ be an $s$-graph. For any $u,v\in V(H)$,
\[
\nu(S_{uv}(H))\leq \nu(H).
\]
\end{lem}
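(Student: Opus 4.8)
The plan is to start from a maximum matching $M=\{f_1,\dots,f_m\}$ of $S_{uv}(H)$, where $m=\nu(S_{uv}(H))$, and to manufacture from it a matching of the same size inside $H$; this immediately yields $\nu(H)\ge m$. The first step is to understand which edges of $M$ actually differ from edges of $H$. By the definition of $S_{uv}$, an edge $f\in E(S_{uv}(H))$ either already lies in $E(H)$, or is a genuinely shifted set $(e\setminus\{v\})\cup\{u\}$ for some $e\in E(H)$ with $v\in e$ and $u\notin e$; in the latter case $u\in f$ and $v\notin f$. Since $M$ is a matching, the vertex $u$ is covered at most once, so at most one edge of $M$ is genuinely shifted.

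If no edge of $M$ is shifted, then $M\subseteq E(H)$ and we are done. Otherwise exactly one edge, say $f_1$, is shifted; I would write $e_1=(f_1\setminus\{u\})\cup\{v\}\in E(H)$, which differs from $f_1$ only in that $u$ is replaced by $v$. The natural candidate matching in $H$ is $\{e_1,f_2,\dots,f_m\}$, and every $f_i$ with $i\ge2$ does lie in $E(H)$, being unshifted. The point where care is needed — and the main obstacle — is that $e_1$ reintroduces the vertex $v$, which may already be used by another edge of $M$.

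To overcome this I would distinguish two subcases. If none of $f_2,\dots,f_m$ contains $v$, then $e_1$ is disjoint from all of them and $\{e_1,f_2,\dots,f_m\}$ is the desired matching of size $m$ in $H$. If some edge, say $f_2$, does contain $v$ (there is at most one, as $M$ is a matching), I would exploit the fact that $f_2$ is a fixed point of the shifting operator: since $f_2\in E(H)$, $v\in f_2$ and $u\notin f_2$, the definition of $S_{uv}$ forces $f_2':=(f_2\setminus\{v\})\cup\{u\}\in E(H)$, for otherwise $f_2$ would itself have been shifted. One then checks that $e_1,f_2',f_3,\dots,f_m$ are pairwise disjoint — the sets $f_1\setminus\{u\}$ and $f_2\setminus\{v\}$ are disjoint because $f_1,f_2$ were, and neither $u$ nor $v$ recurs elsewhere — so $\{e_1,f_2',f_3,\dots,f_m\}\subseteq E(H)$ is again a matching of size $m$. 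In either subcase $\nu(H)\ge m$, which completes the argument.
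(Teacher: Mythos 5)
Your proof is correct and takes essentially the same route as the paper's: both note that at most one edge of the matching can be a shifted edge (since any edge of $E(S_{uv}(H))\setminus E(H)$ contains $u$), un-shift it to recover an edge of $H$ containing $v$, and, when $v$ collides with another matching edge $f_2$, invoke the fixed-point property of $S_{uv}$ to conclude $(f_2\setminus\{v\})\cup\{u\}\in E(H)$ and perform the same two-edge swap. The only difference is presentational: the paper argues by contradiction from a matching of size $k+1$ in $S_{uv}(H)$, whereas you directly build a matching of size $\nu(S_{uv}(H))$ in $H$.
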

\begin{proof}
Suppose for contradiction that $\nu(H)=k$ but $\nu(S_{uv}(H))=k+1$. Let $M=\{e_1,e_2,\ldots, e_{k+1}\}$ be a matching of size $k+1$ in $S_{uv}(H)$. Since each edge in $E(S_{uv}(H))\setminus E(H)$ contains $u$, it follows that exactly one of $e_1,e_2,\ldots, e_{k+1}$ is not in $H$. Without loss of generality, we assume that $e_{k+1}\notin E(H)$. Then, $u\in e_{k+1}$, $v\notin e_{k+1}$ and $e_{k+1}'= e_{k+1}\setminus \{u\}\cup \{v\} \in E(H)$. Since $\nu(H)=k$,  it is easy to see that $e_{k+1}' \cap e_i=\{v\}$ for some $i\in [k]$. Since $e_i\in E(H)\cap E(S_{uv}(H))$ and $u\notin e_i$, by the definition of $S_{uv}$ we have $e_i'=e_i\setminus\{v\}\cup \{u\}\in E(H)$. Then, $M\setminus\{e_i,e_{k+1}\}\cup \{e_i',e_{k+1}'\}$ forms a matching of size $k+1$ in $H$, a contradiction.
\end{proof}

Let $H$ be an $r$-partite $s$-graph on vertex classes $V_1,V_2,\ldots ,V_r,$ and
\[
V_i=\{a_{i,1},a_{i,2},\ldots,a_{i,n_i}\}
\]
for $i=1,2,\ldots,r$. Define a partial order $\prec$ on $V = \cup_{i=1}^r V_i$ such that
\[
a_{i,1}\prec a_{i,2}\prec\cdots\prec a_{i,n_i}
\]
for each $i$ and vertices from different parts  are incomparable. For two different edges  $S_1=\{a_1,a_2,\ldots,a_s\}$ and $S_2=\{b_1,b_2,\ldots,b_s\}$ in $K^{(r)}(V_1,\ldots,V_r)$, we define $S_1\prec S_2$ if and only if there exists a permutation $\sigma_1\sigma_2\cdots\sigma_s$ of $[s]$ such that $a_j\prec b_{\sigma_j}$ or $a_j=b_{\sigma_j}$ holds for all $j=1,\ldots,s$.

An $r$-partite $s$-graph $H$ is called a stable $r$-partite $s$-graph if $S_{ab}(H)= H$ holds for all $a,b\in V(H)$ with $a\prec b$. If $H$ is  stable  and $e\in E(H)$, it is easy to see that for any $s$-element vertex subset $S$ with $S\prec e$, we have $S\in E(H)$. Indeed, let $S=\{a_1,a_2,\ldots,a_s\}$ and $e=\{b_1,b_2,\ldots,b_s\}$ . Without loss of generality, we may assume that $a_i\prec b_i$ for each $i=1,\ldots, s_0$ and  $a_i= b_i$ for each $i=s_0+1,\ldots, s$. Since $S_{a_1b_1}(H)= H$ and $e\in E(H)$, it is easy to see that $e_1=e\setminus\{b_1\}\cup \{a_1\}\in E(H)$. Since $S_{a_2b_2}(H)= H$ and $e_1\in E(H)$, it follows that $e_2=e_1\setminus\{b_2\}\cup \{a_2\}\in E(H)$. Repeat the same argument for $i=3,\ldots,s_0$ and we shall obtain that $S\in E(H)$.

To obtain a stable $r$-partite $s$-graph, we can apply the shifting operator to $H$ iteratively. For an intermediate step, let $H^*$ be the current $r$-partite $s$-graph. If $H^*$ is stable, we are done. If $H^*$ is not stable, there exists a pair $(a,b)$ such that $a\prec b$ and $S_{ab}(H^*)\neq H^*$. Then, apply $S_{ab}$ to $H^*$ and we obtain a new $r$-partite $s$-graph. Define
 \[
 g(H^*):= \sum_{e\in E(H^*)} \sum_{i=1}^r\sum_{j:a_{i,j}\in e} j.
 \]
 Since after each step $g(H^*)$ decreases strictly and $g(H)>0$ holds for all the non-empty $r$-partite $s$-graphs $H$, the process will end in finite steps. It should be mentioned that if we apply the shifting operator in different orders, at the end we may arrive at different stable $r$-partite $s$-graphs. For more properties of the shifting operator, we refer the reader to \cite{frankl0}.


For $u,v\in V(H)$, let $L_H(u)$ denote the set of edges in $H$ containing $u$ and $L_H(u,v)$ denote the set of edges in $H$ containing $u$ and $v$. Let $d_H(u)$ and $d_H(u,v)$ denote the cardinality of $L_H(u)$ and $L_H(u,v)$, respectively. For $X\subset V(H)$, let $\Gamma_H(X)$ denote the set of edges in $H$ that intersect $X$. It should be noticed that $\Gamma_H(\{u\})$ is the same as $L_H(u)$. The subscripts will be dropped if there is no confusion. For $S\subset V(H)$, let $H[S]$ denote the  $s$-graph induced by $S$ and $H\setminus S$ the $s$-graph induced by $V(H)\setminus S$.

\begin{lem}\label{la3}
For $3\leq s\leq r-1$, if $n\geq s^3k+sr$ for $s\leq r-2$  and $n\geq s^3k^2+sr$ for $s=r-1$, then
\begin{eqnarray*}
ex_s(K_{\underbrace{n,\ldots,n}_{r}}^{(s)}, kK^{(s)}_s)=(k-1){r-1\choose s-1}n^{s-1}.
\end{eqnarray*}
\end{lem}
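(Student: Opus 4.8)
The plan is to treat the easy lower bound and the substantive upper bound separately. For the lower bound I would simply specialize the construction $H_1$ to the balanced case: take $V_1'=\{a_{1,1},\dots,a_{1,k-1}\}$ and keep exactly the edges of $K^{(s)}(V_1,\dots,V_r)$ meeting $V_1'$. Each of the $k-1$ vertices of $V_1'$ lies in $\binom{r-1}{s-1}n^{s-1}$ edges, and these edge sets are disjoint, so this $kK_s^{(s)}$-free graph has $(k-1)\binom{r-1}{s-1}n^{s-1}$ edges. It then remains to prove the upper bound. Since $kK_s^{(s)}$ is just a matching of size $k$, the task is exactly: $\nu(H)\le k-1$ implies $e(H)\le (k-1)\Delta$, where $\Delta=\binom{r-1}{s-1}n^{s-1}$ is the maximum possible degree of a vertex of $K^{(s)}(V_1,\dots,V_r)$.

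First I would reduce to a stable $H$: by Lemma \ref{matching} and the fact that shifting preserves the number of edges and keeps $H$ inside the complete multipartite host, I may replace $H$ by a stable $r$-partite $s$-graph with the same edge count and $\nu\le k-1$. A clean structural consequence of stability that I would record at once is that every edge contains some $a_{i,j}$ with $j\le k-1$: if an edge $e=\{a_{i_1,x_{i_1}},\dots,a_{i_s,x_{i_s}}\}$ had all coordinates $x_{i_t}\ge k$, then the $k$ diagonal translates obtained by decreasing every coordinate by $0,1,\dots,k-1$ all lie in $H$ (each is $\prec e$, so stability applies) and are pairwise vertex-disjoint, giving a matching of size $k$. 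Thus $\{a_{i,j}:i\in[r],\,j\le k-1\}$ is a vertex cover, but it has size $r(k-1)$; bounding $e(H)$ by the sum of degrees over this cover loses a factor of $r$, and the diagonal decomposition of Lemma \ref{ma} applied part-set by part-set loses a factor $r/s$. The real work is to recover the factor-$r$ saving forced by the interaction of edges using different choices of $s$ parts.

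To organize this saving I would induct on $k$, with $k=1$ vacuous and $s=r$ furnished by Lemma \ref{ma}. The inductive step rests on the claim that a stable $H$ with $\nu(H)\ge 1$ has a vertex $v$ lying in every maximum matching, i.e. $\nu(H\setminus L_H(v))=\nu(H)-1$. Granting it, $e(H)\le d_H(v)+e(H\setminus L_H(v))\le \Delta+(k-2)\Delta=(k-1)\Delta$, the second term coming from the induction hypothesis applied to $H\setminus L_H(v)$ (matching number $\le k-2$) and the first from $d_H(v)\le \Delta$. To find $v$ I would pass, by shifting the edges of a fixed maximum matching down within their parts, to a maximum matching supported on the low-index box $\{a_{i,j}:j\le \nu(H)\}$, and then argue that a suitable first vertex $a_{i_0,1}$ of a critical part cannot be avoided; in the case where the critical part is itself avoidable, I would delete that whole part and recurse on $r-1$ parts, controlling the deleted edges through the nested links $L_H(a_{i_0,1})\supseteq L_H(a_{i_0,2})\supseteq\cdots$.

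The hard part will be exactly this claim — producing a vertex common to all maximum matchings of a stable multipartite $s$-graph. It fails for general hypergraphs, where small intersecting configurations with empty common intersection (a triangle when $s=2$, or a Fano-type system for $s=3$) have no such vertex; what saves the multipartite setting is that $n$ is large, so such tightly packed configurations cannot be embedded while the matching number stays small, and this is where the hypotheses $n\ge s^3k+sr$, and the stronger $n\ge s^3k^2+sr$ when $s=r-1$ (reflecting the tighter coupling when only a single part is free), are consumed. I expect the verification of this claim, together with the bookkeeping ensuring that the edges removed when recursing on a part never exceed the allotted budget, to be the technical core, while the passage to a stable $H$ and the diagonal-translate observation are routine.
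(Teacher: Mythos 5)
Your reduction is sound up to one point: granting your claim that a stable $H$ with $\nu(H)\geq 1$ contains a vertex $v$ lying in every maximum matching, the induction $e(H)\leq d_H(v)+e(H\setminus L_H(v))\leq\binom{r-1}{s-1}n^{s-1}+(k-2)\binom{r-1}{s-1}n^{s-1}$ would indeed finish the upper bound (the lower bound and the ``diagonal translate'' cover observation are fine). But that claim is precisely where the proposal breaks, and as you state it — for an arbitrary stable $r$-partite $s$-graph — it is simply false, and the largeness of $n$ does not rescue it. Take $H$ to consist of all $s$-element subsets of $\{a_{1,1},a_{2,1},\ldots,a_{s+1,1}\}$, i.e.\ a copy of $K_{s+1}^{(s)}$ on the first vertex of each of $s+1$ parts (this exists since $s\leq r-1$). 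This $H$ is stable (it is a down-set for $\prec$), sits inside the host for every $n$, and has $\nu(H)=1$, yet its maximum matchings are its $s+1$ individual edges, whose common intersection is empty. So no vertex lies in every maximum matching, for any value of $n$. Your stated hope that ``tightly packed configurations cannot be embedded while the matching number stays small'' is exactly wrong: they embed for all $n$; what is true is that they are not \emph{extremal}. But restricting the claim to extremal stable $H$ makes the argument essentially circular — knowing that every extremal configuration has a vertex meeting all maximum matchings is tantamount to knowing the structure of extremal configurations, which is the content of the lemma being proved. Since you yourself defer the verification of the claim (and the bookkeeping of the ``delete a whole part and recurse'' fallback) as ``the technical core,'' the proposal does not constitute a proof: the core is missing, and its literal statement is refuted above.

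For comparison, the paper's induction avoids this trap by deleting not a single vertex but the whole corner $T_0=\{a_{1,1},\ldots,a_{r,1}\}$: stability forces $H[T_0]$ to contain an edge, hence $\nu(H\setminus T_0)\leq k-2$ and the induction hypothesis (with parts of size $n-1$) applies to $H\setminus T_0$. The entire difficulty is then pushed into bounding $|\Gamma(T_0)|$ when it exceeds its natural budget: Case 1 ($t=k-2$) shows every edge meets $V_1$ and transfers the problem to an auxiliary $r$-partite $r$-graph handled by Lemma \ref{ma} plus double counting, while Case 2 ($t\leq k-3$) splits $T_0$ into high- and low-degree vertices and uses link/bipartite-graph counting; it is there, not in any embeddability statement, that the hypotheses $n\geq s^3k+sr$ (and $n\geq s^3k^2+sr$ when $s=r-1$) are consumed. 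If you want to salvage your single-vertex scheme, you would need to prove the claim for edge-maximal stable $H$, and no soft argument of the kind you sketch can do this — the counterexample above shows the obstruction is about extremality, not about room in the host.
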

\begin{proof}
We prove the lemma by induction on $k$. For $k=1$, the lemma holds trivially. Suppose that the lemma holds for all $k'<k$ and $H$ is a $kK^{(s)}_s$-free subgraph of $K_{\underbrace{n,\ldots,n}_{r}}^{(s)}$ with the maximum number of edges. By Lemma \ref{matching}, we may further assume that $H$ is stable. Let
$T_0=\{a_{1,1}, a_{2,1}, \ldots, a_{r,1}\}$, $\nu(H\setminus T_0)=t$ and $M'=\{e_1, \ldots, e_t\}$ be a largest matching in $H\setminus T_0$. Since $H$ is stable, $H[T_0]$ is not empty. Then, it is easy to see that $t\leq k-2$. Otherwise, for any edge $e\in H[T_0]$, $\{e\}\cup M'$ forms a matching of size $k$ in $H$. Since $H\setminus T_0$ is $(t+1)K^{(s)}_s$-free and $n-1\geq s^3(t+1)+rs$ for $s\leq r-2$ and $n-1\geq s^3(t+1)^2+rs$ for $s= r-1$, by the induction hypothesis, it follows that
\[
e(H\setminus T_0) \leq t{r-1\choose s-1}(n-1)^{s-1}.
\]
If
\[
|\Gamma(T_0)|\leq (k-1){r-1\choose s-1}n^{s-1}-t{r-1\choose s-1}(n-1)^{s-1},
\]
then we conclude that
\[
e(H)= e(H\setminus T_0)+|\Gamma(T_0)| \leq (k-1){r-1\choose s-1}n^{s-1}.
\]
Thus, we are left with the case
\begin{align}\label{lowerLT}
|\Gamma(T_0)|> (k-1){r-1\choose s-1}n^{s-1}-t{r-1\choose s-1}(n-1)^{s-1}.
\end{align}

We will show that inequality (\ref{lowerLT})  either implies the lemma or leads to a contradiction. The proof splits into two cases according to the value of $t$.

\textbf{Case 1.} $t=k-2$. Without loss of generality, assume that  $a_{1,1}$ is the vertex in $T_0$ with the maximum degree within $H$. Since
\[
\sum_{i=1}^r d(a_{i,1}) \geq |\Gamma(T_0)|,
\]
by the inequality \eqref{lowerLT} it follows that
\begin{align*}
d(a_{1,1}) &\geq \frac{1}{r}|\Gamma(T_0)| \\
&> \frac{1}{r}\binom{r-1}{s-1}\left((k-1)n^{s-1}-(k-2)(n-1)^{s-1}\right) \\
&\geq\frac{1}{r}{r-1\choose s-1}n^{s-1}.
\end{align*}

Then, the structure of $H$ can be partly described by the following claim.
\begin{claim}\label{claim-1}
Every edge in $H$ intersects $V_1$.
\end{claim}
\begin{proof}
Suppose to the contrary that there exists an edge in $H$ that does not intersect $V_1$. Since $H$ is stable, there exists an edge in $T_0$ that does not contain $a_{1,1}$. Let
$e_0$ be such an edge.  Let $S$ be the set of vertices covered by the edges in $M'\cup\{e_0\}$, where, as before, $M'$ is a matching of size $k-2$ in $H\setminus T_0$. Clearly, $|S|=(k-1)s$. For each $u\in S$, the number of edges containing $u$ and $a_{1,1}$ is at most ${r-2\choose s-2}n^{s-2}$. Then, there are at most $(k-1)s{r-2\choose s-2}n^{s-2}$ edges in $L(a_{1,1})$ that intersect $S$. It follows that the number of edges in $L(a_{1,1})$ that are {disjoint from} the edges in $M'\cup\{e_0\}$ is at least
\begin{align*}
&d(a_{1,1}) - (k-1)s{r-2\choose s-2}n^{s-2}\\
>& \frac{1}{r}{r-1\choose s-1}n^{s-1}- (k-1)s{r-2\choose s-2}n^{s-2}\\
 =& {r-2\choose s-2} n^{s-2}\left(\frac{r-1}{r(s-1)}n-(k-1)s\right)\\
 >&0,
\end{align*}
where the last inequality follows from the assumption that $n \geq 2s^2k.$ Thus, let $e_0'$ be an edge in $L(a_{1,1})$ that is disjoint from the edges in  $M'\cup\{e_0\}$. Then $M'\cup\{e_0,e_0'\}$ forms a matching of size $k$ in $H$, which contradicts the fact that $H$ is $kK^{(s)}_s$-free. Therefore, the claim holds.
\end{proof}

Define an $r$-partite $r$-graph $H^*$ on vertex classes $V_1, \ldots, V_r$. An $r$-element subset $T$ of $V(H)$ forms an edge of $H^*$ if  $H[T]$ is non-empty and $|T\cap V_i|=1$ for $i=1, \ldots, r$. Since $H$ is $kK^{(s)}_s$-free, it follows that $H^*$ is $kK^{(r)}_r$-free. By Lemma \ref{ma}, we have $e(H^*)\leq (k-1)n^{r-1}$. Now we prove the result by double counting. Let
\[
\Phi = \{(e,T)\colon e\in E(H), T\in E\left(K^{(r)}(V_1, V_2,\ldots, V_r)\right), \mbox{ and }e\subset T \}.
\]
For every $T=\{x_1,x_2,\ldots,x_r\}\in E(H^*)$ with $x_i\in V_i$ for each $i$, since by Claim \ref{claim-1} each edge in $H[T]$ contains $x_1$, it follows that
\[
e(H[T])\leq {r-1\choose s-1}.
\]
Moreover, $H[T]$ is non-empty if and only if $T$ forms an edge in $H^*$. Thus,
\[
|\Phi| \leq (k-1)n^{r-1}{r-1\choose s-1}.
\]
On the other hand, each edge in $H$ appears in $n^{r-s}$ pairs in $\Phi$. Therefore, we have
\[
e(H) = |\Phi|/n^{r-s} \leq (k-1){r-1\choose s-1}n^{s-1}.
\]

\textbf{Case 2.} $t\leq k-3$. Let $X$ be the set of vertices in $T_0$ with  degree greater than $\frac{1}{2r}{r-1\choose s-1}n^{s-1}$ and $Y=T_0\setminus X$.

First, we prove the following claim, which will be used several times.

 \begin{claim}\label{claim-2}
$\nu(H\setminus X)\leq k-1-|X|$.
 \end{claim}
\begin{proof}
 Suppose to the contrary that  $\nu(H\setminus X)\geq k-|X|$. Let $M^*$ be a largest matching in $H\setminus X$. We shall show that $M^*$ can be greedily enlarged to a matching of size $k$ in $H$, which contradicts the fact that $\nu(H)\leq k-1$. Since $\nu(H\setminus X)\geq k-|X|$, it follows that $|X|\geq k-\nu(H\setminus X)= k-|M^*|$. Let $l=k-|M^*|$ and $x_1,x_2,\ldots,x_{l}$ be $l$ vertices in $X$. Set $X_i^+=\{x_{i+1},x_{i+2},\ldots,x_{l}\}$ and $M_0=M^*$.  Note that
\begin{align*}
d(x_1)&\geq \frac{1}{2r}{r-1\choose s-1}n^{s-1}\\
&= \frac{n}{2(s-1)}\cdot\frac{r-1}{r}\cdot{r-2\choose s-2}n^{s-2}\\
&\geq  \frac{n}{2(s-1)}\cdot \frac{2}{3}\cdot{r-2\choose s-2}n^{s-2}\\
&>  sk\cdot{r-2\choose s-2}n^{s-2}\\
&> (|M_0|s+|X_1^+|){r-2\choose s-2}n^{s-2},
\end{align*}
where the second inequality follows from the fact that $r\geq 3$, the third inequality follows form the assumption that $n \geq 3s^2k$ and the last inequality follows from the fact that $k= |M^*|+l>|M_0|+|X_1^+|$. Since there are at most $(|M_0|s+|X_1^+|){r-2\choose s-2}n^{s-2}$ edges in $L(x_1)$ that intersect $(\cup_{e\in M_0} e)\bigcup X_1^+$, we can choose $e_1'$ from $L(x_1)$ such that $M_1=M_0\cup \{e_1'\}$ is a matching of size $|M_0|+1$ and $x_2,x_3,\ldots,x_{l}$ are not used. Now we continue to choose an edge from each of   $L(x_2), \ldots, L(x_{l})$ to enlarge the matching. When dealing with $L(x_i)$,  note that $|X_i^+|=l-i$. Since there are at most $(|M_{i-1}|s+|X_i^+|){r-2\choose s-2}n^{s-2}$ edges in $L(x_i)$ that intersect $(\cup_{e\in M_{i-1}}e)\bigcup X_i^+$ and
\begin{align*}
d(x_i)&\geq \frac{1}{2r}{r-1\choose s-1}n^{s-1}\\
&> sk{r-2\choose s-2}n^{s-2}\\
&> \left(|M_{i-1}|s+|X_i^+|\right){r-2\choose s-2}n^{s-2},
\end{align*}
where the last inequality follows from $k= |M^*|+l>|M_{i-1}|+|X_i^+|$, {therefore} we can choose $e_i'$ from $L(x_i)$ such that $M_{i}=M_{i-1}\cup \{e_i'\}$ is a matching of size $|M_{i-1}|+1$ and $x_{i+1},x_{i+2},\ldots,x_{l}$ are not used. Finally, we end up with $M_{l}$, which is a matching of size $|M^*|+l=k$. It contradicts the fact that $H$ is $kK^{(s)}_s$-free. Thus, we conclude that  $\nu(H\setminus X)\leq k-1-|X|$.
\end{proof}

Then, we show that the sizes of both $X$ and the matching number of $H\setminus X$ can be determined by the matching number of $H\setminus T_0$.

\begin{claim}\label{claim-3}
$|X|= k-1-t$.
\end{claim}
\begin{proof}
 By Claim \ref{claim-2} we have
\[
t=\nu(H\setminus T_0) \leq \nu(H\setminus X) \leq k-1-|X|.
\]
Thus, $|X|\leq k-1-t$. If $|X|\leq k-2-t$, then
\begin{align*}
|\Gamma(T_0)|&\leq |X|{r-1\choose s-1}n^{s-1}+(r-|X|)\cdot\frac{1}{2r}{r-1\choose s-1}n^{s-1}\\
&= |X|{r-1\choose s-1}n^{s-1}\left(1-\frac{1}{2r}\right)+\frac{1}{2}{r-1\choose s-1}n^{s-1}\\
&\leq\left((k-2-t)\left(1-\frac{1}{2r}\right)+\frac{1}{2}\right){r-1\choose s-1}n^{s-1}\\
&<(k-1-t){r-1\choose s-1}n^{s-1},
\end{align*}
which contradicts the inequality \eqref{lowerLT}. Thus, the claim holds.
\end{proof}
\begin{claim}\label{claim-4}
$\nu(H\setminus X) =\nu(H\setminus T_0)=t$.
\end{claim}
\begin{proof}
By Claims \ref{claim-2} and \ref{claim-3}, we have
\[
\nu(H\setminus X)\leq k-1-|X| = t =\nu(H\setminus T_0).
\]
Moreover, since $H\setminus T_0$ is a subgraph of $H\setminus X$, it follows that
$\nu(H\setminus X)\geq \nu(H\setminus T_0)$.  Thus, the claim holds.
\end{proof}

We also claim that $Y$ cannot be an empty set. Otherwise, by Claim \ref{claim-3} we have
\[
|\Gamma(T_0)| =|\Gamma(X)| \leq |X|\binom{r-1}{s-1} n^{s-1}=(k-1-t)\binom{r-1}{s-1} n^{s-1},
\]
which contradicts the inequality \eqref{lowerLT}.



By Claim \ref{claim-4}, we have that all edges in $L_{H\setminus X}(y)$ {intersect} $\cup_{e\in M'} e$ for each $y\in Y$. Otherwise, if there exists an edge $e_0$ in $L_{H\setminus X}(y)$ that is disjoint from $\cup_{e\in M'} e$ for some $y\in Y$, then $M'\cup \{e_0\}$ forms a matching of size $t+1$ in $H\setminus X$, a contradiction.   Then, we can obtain an upper bound on $|\Gamma_{H\setminus X}(Y)|$ by the following argument. For $e_i\in M'$, define a bipartite graph $G_i$ on vertex classes $Y$ and $e_i$, where $e_i$ is viewed as one of the sides of $G_i$. For $u\in e_i$ and $v\in Y$, $\{u,v\}$ is an edge of $G_i$ if $d_{H\setminus X}(u, v)>(t+1)s{r-3\choose s-3}n^{s-3}$. If there is an $i$ such that $\nu(G_i)\geq 2$, let $\{u_p,v_p\}$ and $\{u_q,v_q\}$ be two disjoint edges of $G_i$ {with $u_p,u_q\in e_i$ and $v_p,v_q\in Y$}. Since there are at most $ts{r-3\choose s-3}n^{s-3}$ edges in $L_{H\setminus X}(u_p,v_p)$ that intersect  $(\cup_{e\in M'} e)\cup \{v_q\}\setminus \{u_p\}$,  we can find an edge $f_p$ in $L_{H\setminus X}(u_p,v_p)$ that is disjoint from $(\cup_{e\in M'} e)\cup \{v_q\}\setminus \{u_p\}$. Similarly, there are at most $(t+1)s{r-3\choose s-3}n^{s-3}$ edges in $L_{H\setminus X}(u_q,v_q)$ that intersect  $(\cup_{e\in M'} e)\cup \{f_p\}\setminus \{u_q\}$. Thus, we can find an edge $f_q$ in $L_{H\setminus X}(u_q,v_q)$ that is disjoint from {$(\cup_{e\in M'} e)\cup \{f_p\}\setminus \{u_q\}$}. Now $(M'\setminus\{e_i\})\cup \{f_p,f_q\}$ forms a matching of size $t+1$ in $H\setminus X$, which contradicts with Claim \ref{claim-4}. Thus, we conclude that each $G_i$ has matching number at most one.

Let $e_i\in M'$ and
\[
\Gamma_{H\setminus X}(e_i,Y) =\{e\in E(H\setminus X)\colon e\cap e_i\neq \emptyset \mbox{ and } e\cap Y\neq \emptyset\}.
\]
The rest of the proof is divided into two subcases according to the size of $|Y|$.

\textbf{Case 2.1.} $|Y|\geq s$. Since $\nu(G_i)\leq 1$, by Lemma \ref{ma}, there are at most $|Y|$ edges in $G_i$.
Then,
\begin{align*}
|\Gamma_{H\setminus X}(e_i,Y)| &\leq e(G_i){r-2\choose s-2}n^{s-2} +\left(|Y||e_i|-e(G_i)\right)(t+1)s{r-3\choose s-3}n^{s-3}\\[5pt]
&= e(G_i){r-3\choose s-3}n^{s-3}\left(\frac{r-2}{s-2}n-(t+1)s\right)+|Y||e_i|(t+1)s{r-3\choose s-3}n^{s-3}\\[5pt]
&\leq |Y|{r-3\choose s-3}n^{s-3}\left(\frac{r-2}{s-2}n-(t+1)s\right)+|Y|s^2(t+1){r-3\choose s-3}n^{s-3},\\
&= (r-|X|){r-3\choose s-3}n^{s-3}\left(\frac{r-2}{s-2}n+(t+1)s(s-1)\right),
\end{align*}
where the second inequality follows from the assumption that $n>sk$ and $e(G_i)\leq |Y|$.

By Claim \ref{claim-4}, for any $y\in Y$ all edges in $L_{H\setminus X}(y)$ intersect $\cup_{e\in M'} e$.  It follows that
\begin{align*}
|\Gamma_{H\setminus X}(Y)| &\leq \sum_{i=1}^t |\Gamma_{H\setminus X}(e_i,Y)|\leq t(r-|X|){r-3\choose s-3}n^{s-3}\left(\frac{r-2}{s-2}n+(t+1)s(s-1)\right).
\end{align*}
Since
\[
|\Gamma(X)| \leq \sum_{i=1}^{|X|} d(x_i) \leq |X| {r-1\choose s-1}n^{s-1},
\]
{therefore}
\begin{align}\label{upperLT}
|\Gamma(T_0)| &=|\Gamma(X)|+|\Gamma_{H\setminus X}(Y)|\nonumber\\[5pt]
&\leq |X| {r-1\choose s-1}n^{s-1} + t(r-|X|){r-3\choose s-3}n^{s-3}\left(\frac{r-2}{s-2}n+(t+1)s(s-1)\right).
\end{align}
By combining the inequalities \eqref{lowerLT} and \eqref{upperLT} and using the fact that $|X| = k-1-t$, we arrive at
\begin{align*}
t{r-1\choose s-1}\left(n^{s-1}-(n-1)^{s-1}\right)\leq t(r-|X|){r-3\choose s-3}n^{s-3}\left(\frac{r-2}{s-2}n+(t+1)s(s-1)\right).
\end{align*}

Since $|X|\geq 2$ (because $|X| = k-1-t$ and in Case 2 we assume that $t \leq k-3$), we have
\begin{align}\label{middineq}
\frac{(r-1)(r-2)}{(s-1)(s-2)}\left(n^{s-1}-(n-1)^{s-1}\right)\leq (r-2)n^{s-3}\left(\frac{r-2}{s-2}n+(t+1)s(s-1)\right).
\end{align}
By Taylor's Theorem with Lagrange remainder, it can be deduced that
\begin{align}\label{taylor1}
n^{s-1}-(n-1)^{s-1}\geq (s-1) n^{s-2} -\frac{(s-1)(s-2)}{2}n^{s-3}.
\end{align}
By combining the inequalities \eqref{middineq} and \eqref{taylor1}, we obtain that
\begin{align}\label{finalineq}
\frac{r-1}{s-2} n^{s-2}-\frac{r-1}{2}n^{s-3}\leq n^{s-3}\left(\frac{r-2}{s-2}n+(t+1)s(s-1)\right).
\end{align}
Since $t\leq k-3$, by simplifying the inequality \eqref{finalineq} we arrive at
{\[
 n\leq s(s-1)(s-2)(k-2)+\frac{(r-1)(s-2)}{2}<s^3k+sr,
\]}
which contradicts the fact that $n\geq s^3k+sr$.

\textbf{Case 2.2.} $|Y|\leq s-1$.

For each $i=1,2,\ldots,t$, since $\nu(G_i)\leq 1$, by Lemma \ref{ma} we have $e(G_i)\leq s$.
Then
\begin{align*}
|\Gamma_{H\setminus X}(e_i,Y)| &\leq e(G_i){r-2\choose s-2}n^{s-2} +\left(|Y||e_i|-e(G_i)\right)(t+1)s{r-3\choose s-3}n^{s-3}\\[5pt]
&= e(G_i){r-3\choose s-3}n^{s-3}\left(\frac{r-2}{s-2}n-(t+1)s\right)+|Y||e_i|(t+1)s{r-3\choose s-3}n^{s-3}\\[5pt]
&\leq s{r-3\choose s-3}n^{s-3}\left(\frac{r-2}{s-2}n-(t+1)s\right)+|Y|s^2(t+1){r-3\choose s-3}n^{s-3},\\
&= s{r-3\choose s-3}n^{s-3}\left(\frac{r-2}{s-2}n+(t+1)s(|Y|-1)\right),
\end{align*}
where the second inequality follows from the assumption that $n>sk$ and $e(G_i)\leq s$. Thus,
\begin{align}\label{hxyineq}
|\Gamma_{H\setminus X}(Y)| &\leq \sum_{i=1}^t |\Gamma_{H\setminus X}(e_i,Y)|\leq st{r-3\choose s-3}n^{s-3}\left(\frac{r-2}{s-2}n+(t+1)s(|Y|-1)\right).
\end{align}
Since
\[
|\Gamma(X)| \leq \sum_{i=1}^{|X|} d(x_i) \leq |X| {r-1\choose s-1}n^{s-1},
\]
therefore
\begin{align}\label{upperLT2}
|\Gamma(T_0)| &=|\Gamma(X)|+|\Gamma_{H\setminus X}(Y)|\nonumber\\[5pt]
&\leq |X|{r-1\choose s-1}n^{s-1}+st{r-3\choose s-3}n^{s-3}\left(\frac{r-2}{s-2}n+(t+1)s(|Y|-1)\right).
\end{align}
By combining the inequalities \eqref{lowerLT} and \eqref{upperLT2} and using Claim 3, we arrive at
\begin{align}\label{middineq2}
t{r-1\choose s-1}\left(n^{s-1}-(n-1)^{s-1}\right)\leq st{r-3\choose s-3}n^{s-3}\left(\frac{r-2}{s-2}n+(t+1)s(|Y|-1)\right).
\end{align}
Then by combining the inequalities \eqref{taylor1} and \eqref{middineq2} we obtain that
\begin{align}\label{finalineq2}
\frac{(r-1)(r-2)}{s-2}n^{s-2}-\frac{(r-1)(r-2)}{2}n^{s-3}\leq sn^{s-3}\left(\frac{r-2}{s-2}n+(t+1)s(|Y|-1)\right).
\end{align}
By simplifying, we arrive at
\[
 (r-1-s)n\leq \frac{(r-1)(s-2)}{2} + \frac{s^2(s-2)(t+1)(|Y|-1)}{r-2}.
\]
Since $|Y|=r-|X|\leq  r-2$ and $t+1\leq k$, it follow that
\begin{align*}
 (r-1-s)n&\leq \frac{(r-1)(s-2)}{2} + s^2(s-2)k.
\end{align*}
Since $n\geq s^3k+sr$ when $s\leq r-2$, it leads to a contradiction for $s\leq r-2$.

For $r=s+1$, we shall give a slightly better upper bound on $|L(X)|$ as follows. Let $X=\{x_1,\ldots,x_{k-1-t}\}$, $X_{0} = \emptyset$ and $X_i =\{x_1,\ldots,x_i\}$ for $i=1,2,\ldots,k-1-t$. Note that
\[
|\Gamma(X)| = \sum_{i=1}^{|X|} |L_{H\setminus X_{i-1}}(x_i)|.
\]
Now, it is easy to see that
\[
L_{H\setminus X_{1}}(x_2)\leq  {s-1\choose s-1}n^{s-1}+(n-1){s-1\choose s-2}n^{s-2} = sn^{s-1} - (s-1)n^{s-2}.
\]
For $i\neq 2$, we use the trivial inequality  $|L_{H\setminus X_{i-1}}(x_i)|\leq sn^{s-1}$.
Since $|X|\geq 2$ ({because $|X| = k-1-t$ and in Case 2 we assume that $t \leq k-3$)}, it follows that
\[
|\Gamma(X)| = \sum_{i=1}^{|X|} |L_{H\setminus X_{i-1}}(x_i)|\leq |X|sn^{s-1} - (s-1)n^{s-2}.
\]
Then, by the inequality \eqref{hxyineq}, we obtain an upper bound on $\Gamma(T_0)$ as follows:
\begin{align}\label{upperLT3}
|\Gamma(T_0)| &=|\Gamma(X)|+|\Gamma_{H\setminus X}(Y)|\nonumber\\[5pt]
&\leq |X|sn^{s-1} - (s-1)n^{s-2}+st{r-3\choose s-3}n^{s-3}\left(\frac{r-2}{s-2}n+(t+1)s(|Y|-1)\right)\nonumber\\[5pt]
&= |X|sn^{s-1} - (s-1)n^{s-2}+st(s-2)n^{s-3}\left(\frac{s-1}{s-2}n+(t+1)s(|Y|-1)\right).
\end{align}
By combining the inequalities \eqref{lowerLT} and \eqref{upperLT3} and using Claim 3,  we have
\begin{align}\label{middineq3}
ts\left(n^{s-1}-(n-1)^{s-1}\right)\leq st(s-2)n^{s-3}\left(\frac{s-1}{s-2}n+(t+1)s(|Y|-1)\right) -(s-1)n^{s-2}.
\end{align}
By simplifying, we obtain that
\begin{align}\label{middineq4}
ts\left(n^{s-1}-(n-1)^{s-1}\right)\leq (st(s-1)-s+1) n^{s-2}+s^2(s-2)t(t+1)(|Y|-1)n^{s-3}.
\end{align}
By combining the inequalities \eqref{taylor1} and \eqref{middineq4}, we arrive at
\[
(s-1)n \leq \frac{s(s-1)(s-2)t}{2} +s^2(s-2)t(t+1)(|Y|-1).
\]
Since $|Y|= s+1-|X|\leq s-1$ and $t+1\leq k$, it follows that
\[
n\leq \frac{s(s-2)}{2}t+s^2 t(t+1)(|Y|-1)\cdot\frac{s-2}{s-1} \leq \frac{s^2}{2}k+ s^2(s-2)k^2\leq s^3k^2,
\]
which contradicts the fact that $n\geq s^3k^2+sr$ for $s=r-1$.

Thus, we complete the proof of Lemma \ref{la3}.
\end{proof}
In the following proof of Theorem \ref{main-4}, we shall use Theorem \ref{s1} and Lemma \ref{la3} as base cases.

{\noindent \it Proof of Theorem \ref{main-4}.} Notice that Lemma \ref{ma} implies the theorem for $s=r$. So we are left with the case $s\leq r-1$. We prove by induction on $(s,\sum_{i=2}^r (n_i-n_1))$.
 The base case of $s=2$ is verified for all $r$ and $n_1\leq n_2\leq \cdots\leq n_r$ by Theorem \ref{s1}. For every $s\geq 3$, the base case of $\sum_{i=2}^r (n_i-n_1)=0$ is verified for all $r$ by  Lemma \ref{la3}.  Suppose now that $\sum_{i=2}^r (n_i-n_1)>0$. Assume that for all $r$, the theorem holds for all pairs $(s',\sum_{i=2}^r (n_i'-n_1'))$ such that $s'<s$ or $s'=s$ together with $\sum_{i=2}^r (n_i'-n_1')<\sum_{i=2}^r (n_i-n_1)$.
  There exists an $i\in [2,r]$ such that $n_i>n_{i-1}$. Without loss of generality, assume that $i=r$. Let $H$ be a $kK^{(s)}_s$-free subgraph of $K^{(s)}_{n_1,\ldots,n_r}$. By Lemma \ref{matching} we may assume that $H$ is stable. Let $V_r$ be the vertex set with cardinality $n_r$ and
\[
V_r=\{a_{r,1},a_{r,2},\ldots,a_{r,n_r}\}.
\]
Let $H'=H\setminus\{a_{r,n_r}\}$ and
\[
H(a_{r,n_r}) = \{S\subset V\colon S\cup\{a_{r,n_r}\}\in E(H)\}.
\]
Clearly, $H(a_{r,n_r})$ is an $(r-1)$-partite $(s-1)$-graph with parts of sizes $n_1,n_2,\ldots,n_{r-1}$. We claim that $\nu(H(a_{r,n_r}))\leq k-1$. Otherwise, suppose $M=\{e_1,e_2,\ldots,e_k\}$ is a matching of size $k$ in $H(a_{r,n_r})$. Since $H$ is stable and $n_r>k$,
$\{e_1\cup\{a_{r,1}\},e_2\cup\{a_{r,2}\},\ldots,e_k\cup\{a_{r,k}\}\}$ forms a matching of size $k$, which contradicts the fact that $H$ is $kK^{(s)}_s$-free. Since $H'$ is $kK^{(s)}_s$-free, by the induction hypothesis on $\sum_{i=2}^r (n_i-n_1)$, we have
{
\[
e(H')\leq f_{k}^{(s)}(n_2\ldots,n_{r-1},n_{r}-1).
\]
}
Since $H(a_{r,n_r})$ is a $kK^{(s-1)}_{s-1}$-free $(r-1)$-partite $(s-1)$-graph, $n_1\geq s^3k+sr\geq (s-1)^3k+(s-1)(r-1)$ for $(s-1)\leq (r-1)-2$ and $n_1\geq s^3k^2+sr\geq (s-1)^3k^2+(s-1)(r-1)$ for $(s-1)=(r-1)-1$, by the induction hypothesis on $s$, we have
{
\[
e(H(a_{r,n_r})) \leq f_{k}^{(s-1)}(n_2\ldots,n_{r-1}).
\]
}
Thus,
\begin{align*}
e(H)&= e(H')+e(H(a_{r,n_r})) \\
&\leq f^{(s)}_k(n_2,\ldots,n_{r-1},n_r - 1) + f^{(s-1)}_k(n_2,\ldots,n_{r-1})\\
&=f_{k}^{(s)}(n_2\ldots,n_{r-1},n_{r}),
\end{align*}
which completes the proof.
\qed

\section{Tur\'{a}n number of $kK^{(s)}_r$ in $r$-partite $s$-graphs}

In this section, we generalize the result of \cite{de2} to $s$-graphs by using a probabilistic argument. The following lemma will be useful for us.
\begin{lem}\label{op}
Assume that $b>0$, $w_1\geq w_2\geq \cdots \geq w_N>0$ and let $(P)$ be a linear programming model as follows:
\begin{align*}
\max\quad &z=\sum_{i=1}^N x_i\\
s.t.\quad &\sum_{i=1}^N w_i^{-1} x_i\leq b, \\
 &0\leq x_i\leq w_i, \quad i=1,2,\ldots,N.
\end{align*}
Let $M$ be the integral part of $b$ and $a= w_{M+1}(b - M)$.
Then $\sum_{i=1}^{M} w_i +a$ is the optimal value of $(P)$.
\end{lem}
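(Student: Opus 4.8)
The plan is to sandwich the optimal value between a feasible solution that attains $\sum_{i=1}^{M} w_i + a$ and a matching certificate showing no feasible point can exceed this, so that the two bounds coincide. The guiding intuition is that in the budget constraint one unit of $x_i$ consumes $w_i^{-1}$ of the budget while contributing one unit to the objective; hence the coordinates with the largest $w_i$ are the most efficient, and since $w_1 \ge \cdots \ge w_N$ a greedy rule should fill them in increasing order of index.

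For the lower bound I would simply write down this greedy solution: set $x_i = w_i$ for $1 \le i \le M$, set $x_{M+1} = w_{M+1}(b-M) = a$, and set $x_i = 0$ for $i \ge M+2$. Driving a coordinate to its cap $w_i$ spends exactly $w_i^{-1} w_i = 1$ unit of budget, so this choice consumes $M + (b-M) = b$, meeting the budget with equality; because $M$ is the integral part of $b$ we have $0 \le b - M < 1$, hence $0 \le a \le w_{M+1}$ and every box constraint holds. The objective value is $\sum_{i=1}^M w_i + a$, so this is a feasible point attaining the claimed value.

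For the upper bound I would exhibit a dual-type certificate: take $y = w_{M+1} > 0$ and $\lambda_i = \max\{0,\, 1 - w_{M+1}/w_i\}$ for each $i$. Monotonicity of $(w_i)$ makes the certificate inequality $w_i^{-1}y + \lambda_i \ge 1$ hold on both index ranges: for $i \le M$ we have $w_{M+1} \le w_i$, so $\lambda_i = 1 - w_{M+1}/w_i \ge 0$ and $w_i^{-1}y + \lambda_i = 1$; for $i \ge M+1$ we have $w_i \le w_{M+1}$, so $\lambda_i = 0$ and $w_i^{-1}y = w_{M+1}/w_i \ge 1$. Then for any feasible $x$, using $x_i \ge 0$, $y \ge 0$, $\lambda_i \ge 0$, $\sum_i w_i^{-1}x_i \le b$ and $x_i \le w_i$, the chain $\sum_i x_i \le \sum_i x_i(w_i^{-1}y + \lambda_i) = y\sum_i w_i^{-1}x_i + \sum_i \lambda_i x_i \le by + \sum_i \lambda_i w_i$ bounds the objective, and a direct computation gives $by + \sum_i \lambda_i w_i = b\,w_{M+1} + \sum_{i=1}^M (w_i - w_{M+1}) = \sum_{i=1}^M w_i + w_{M+1}(b-M) = \sum_{i=1}^M w_i + a$. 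This matches the feasible value from the previous step, so the optimum is exactly $\sum_{i=1}^M w_i + a$.

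I expect the only real technical point to be choosing the multiplier $y = w_{M+1}$ correctly and verifying $w_i^{-1}y + \lambda_i \ge 1$ separately on the two index ranges via monotonicity of $(w_i)$; everything else is the greedy/efficiency observation. The minor edge cases I would record are $M = 0$ (the first sum is empty and $y = w_1$), $b$ integral (then $a = 0$ and coordinate $M+1$ stays at $0$), and the implicit hypothesis $M+1 \le N$ that makes $w_{M+1}$ meaningful; when $b \ge N$ all caps are attained and the optimum is trivially $\sum_i w_i$.
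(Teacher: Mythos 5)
Your proposal is correct, and it takes a genuinely different route from the paper. The paper proves only the upper bound, and it does so by contradiction: assuming a feasible $y$ with $\sum_i y_i > \sum_{i=1}^{M} w_i + a$, it rewrites the budget as $b = \sum_{i=1}^{M} w_i^{-1}w_i + w_{M+1}^{-1}a$, compares the slack $\sum_{i=1}^{M} w_i^{-1}(w_i - y_i)$ against $w_{M+1}^{-1}\sum_{i=1}^{M}(w_i - y_i)$ from two sides using the monotonicity of the $w_i$, and lands on $w_M^{-1} > w_{M+1}^{-1}$, a contradiction; the feasibility of the greedy point is left implicit. You instead exhibit an explicit weak-duality certificate $y = w_{M+1}$, $\lambda_i = \max\{0,\, 1 - w_{M+1}/w_i\}$, verify $w_i^{-1}y + \lambda_i \geq 1$ on both index ranges, and check that the dual value $bw_{M+1} + \sum_{i=1}^{M}(w_i - w_{M+1})$ telescopes to $\sum_{i=1}^{M} w_i + a$ — all of your computations are correct, and your chain is self-contained (no LP theory is actually invoked, just nonnegativity and the box/budget constraints). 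What your approach buys: a constructive optimality certificate rather than a proof by contradiction, an explicit verification of the attaining feasible point (which the paper omits), and an explicit record of the edge cases, in particular the implicit hypothesis $M+1 \leq N$ needed for $w_{M+1}$ to be defined — a hypothesis the paper also uses silently and which does hold in both of its applications, since there $b = \binom{r}{s} - 1 + \tfrac{k-1}{n_1}$ with $k \leq n_1$, so $M = N - 1$. What the paper's argument buys is brevity: it needs no auxiliary multipliers, only two monotonicity comparisons on a single hypothetical solution.
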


\begin{proof} Suppose to the contrary that there exists a feasible solution $y=(y_1,y_2,\ldots,y_N)$ to $(P)$ such that
\[
\sum_{i=1}^N y_i > \sum_{i=1}^{M} w_i +a.
\]
Since $y$ is a feasible solution, it follows that
\[
\sum\limits_{i=1}^N w_i^{-1} y_i \leq b=M+w_{M+1}^{-1}a=\sum\limits_{i=1}^{M} w_i^{-1} w_i+w_{M+1}^{-1}a.
\]
Then, since $w_i\geq w_j$ for any $i<j$, we have
\[
\sum\limits_{i=1}^{M} w_i^{-1} (w_i-y_i)\geq \sum\limits_{i=M+1}^N w_i^{-1} y_i -w_{M+1}^{-1}a \geq  w_{M+1}^{-1}\left(\sum\limits_{i=M+1}^N y_i -a \right)>w_{M+1}^{-1}\sum\limits_{i=1}^{M} (w_i-y_i) .
\]
On the other hand, since
\[
\sum_{i=1}^{M} w_i^{-1} (w_i-y_i)\leq w_{M}^{-1}\sum_{i=1}^{M} (w_i-y_i),
\]
we arrived at $w_{M}^{-1}>w_{M+1}^{-1}$, a contradiction. Thus, the lemma holds.
\end{proof}

Let $H$ be an $r$-partite $s$-graph on vertex classes $V_1,V_2,\ldots,V_r$.
For any $A\subset [r]$, we shall write $\cup_{i\in A} V_i$ as $V_A$ for short. Denote by $E(V_A)$ the edge set of the induced subgraph $H[V_A]$ and $e(V_A)$ the cardinality of $E(V_A)$.

{\noindent \it Proof of Theorem \ref{main-1}.}
Suppose $H\subseteq K^{(s)}_{n_1, \ldots, n_r}$ does not contain any copy of $kK^{(s)}_r$. Choose an $r$-tuple $(x_1, x_2, \ldots, x_r)$ from $V_1\times V_2\times\cdots\times V_r$ uniformly at random. Let $T=\{x_1, x_2, \ldots, x_r\}$ and  $X(T)$ be the number of edges in $H[T]$. Then
\begin{align}\label{eq1}
\mathbb{E}(X(T))=\sum_{S\in E(H)}Pr(S\subset T)=\sum_{A:A\subset [r]\atop|A|=s}\sum_{S\in E(V_A)}\frac{1}{n_A}=\sum_{A:A\subset [r]\atop|A|=s}\frac{e(V_A)}{n_A}.
\end{align}

On the other hand, let $m$ be the number of copies of $K^{(s)}_r$ in $H$. Define an $r$-partite $r$-graph $H^*$ on the same vertex classes $V_1, V_2, \ldots, V_r$.  An $r$-element set $S$ forms an edge  in $H^*$ if and only if $H[S]$ is a copy of $K^{(s)}_r$. Since $H$ is $kK_r^{(s)}$-free, it follows that the matching number of $H^*$ is at most $k-1$. Moreover, the number of edges in $H^*$ is exactly $m$.  Then, by  Lemma \ref{ma}, we have $m\leq(k-1)n_2\cdots n_r$. Let $A_T$ be the event that $H[T]$ is a copy of $K_r^{(s)}$. Clearly, we have
\[
Pr(A_T)= \frac{m}{n_1n_1\cdots n_r} \leq \frac{k-1}{n_1}.
\]
Thus,
\begin{align}\label{eq2}
\mathbb{E}(X(T))&= \mathbb{E}(X(T)|A_T) Pr(A_T) +\mathbb{E}(X(T)|\overline{A_T}) Pr(\overline{A_T})\nonumber\\[5pt]
&\leq{r\choose s} Pr(A_T)+\left({r\choose s}-1\right)(1- Pr(A_T))\nonumber\\
&={r\choose s}-1+Pr(A_T)\nonumber\\
&\leq{r\choose s}-1+\frac{k-1}{n_1}.
\end{align}
Putting (\ref{eq1}) and (\ref{eq2}) together, we obtain that
\begin{eqnarray}\label{eq-op1}
\sum_{A:A\subset [r]\atop|A|=s}e(V_A)\frac{1}{n_A}\leq{r\choose s}-1+\frac{k-1}{n_1}.
\end{eqnarray}
We consider the linear programming model (P1) as follows:
\begin{align*}
\max\quad &z=\sum_{A:A\subset [r]\atop|A|=s} x_A\\
s.t.\quad&\sum\limits_{A:A\subset [r]\atop|A|=s} n_A^{-1} x_A\leq {r\choose s}-1+\frac{k-1}{n_1},\\
&0\leq x_A\leq n_A, \ A\in \binom{[r]}{s}.
\end{align*}

Applying Lemma \ref{op} by setting $N={r\choose s}$, $b={r\choose s}-1+\frac{k-1}{n_1}$ and $w_i$ be the $i$-th largest value in $\{n_A\colon A\in \binom{[r]}{s}\}$ for each $i\in 1,2,\ldots,\binom{r}{s}$ in (P), we have $M=\lfloor b \rfloor = {r\choose s}-1$. Since $n_{[s]}\leq n_A$ for all $A\in \binom{[r]}{s}$, it follows that
\[
a= w_{M+1}(b - M) =n_{[s]} \cdot \frac{k-1}{n_1}=(k-1)n_{[2,s]}.
\]
Thus, the optimal value of (P1) is
\begin{align*}
\sum_{i=1}^{M} w_i +a &= \sum_{A:A\subset [r]\atop|A|=s, A\neq [s]} n_A + (k-1)n_{[2,s]}= g_{k}^{(s)}(n_1,n_2,\ldots,n_r).
\end{align*}

Let $y$ be a vector indexed by the $s$-element subset $A$ of $[r]$ with $y_A=e(V_A)$. Since $e(V_A)\leq n_A$ and the inequality \eqref{eq-op1} holds, it follows that $y$ is a feasible solution to (P1). Therefore, we have
\begin{eqnarray*}
e(H)=\sum_{A:A\subset [r]\atop|A|=s}e(V_A)=\sum_{A:A\subset [r]\atop|A|=s}y_A\leq  g_{k}^{(s)}(n_1,n_2,\ldots,n_r).
\end{eqnarray*}
Thus, the theorem follows.
\qed

\section{The number of $s$-cliques in $r$-partite graphs}
In this section, we first determine $ex(K_{n_1, \ldots, n_r},K_s, kK_r)$ for the case $n_1\leq n_2\leq n_3 =n_4=\cdots = n_r$. Then, by utilizing a result on rainbow matchings, we  determine  $ex(K_{n_1, \ldots, n_r}, K_s, kK_r)$ for all $n_1, \ldots, n_r$ with $n_4\geq r^r(k-1)k^{2r-2}$.

For an $r$-partite graph $G$ on vertex classes $V_1,V_2,\ldots, V_r$, we use $K_s(G)$ to denote the family of $s$-element subsets of $V(G)$ that form $s$-cliques in $G$ and for $u\in V(G)$ we use $K_s(u,G)$ to denote the family of $s$-element subsets in $K_s(G)$ that contain $u$. For any $A\subset [r]$, we also use $K_s(V_A)$ to denote $K_s(G[V_A])$. Let $k_s(G)$, $k_s(u,G)$ and $k_s(V_A)$ be the cardinalities of $K_s(G)$, $K_s(u,G)$ and $K_s(V_A)$, respectively.

{\noindent \it Proof of Theorem \ref{main-2}.} Let $V_1,V_2,\ldots, V_r$ be the  vertex classes such that $|V_i|=n_i$ for each $i=1,2,\ldots,r$ and $n_4=\ldots=n_r=n_3$.
Suppose $G\subseteq K(V_1,V_2,\ldots,V_r)$ does not contain any copy of $kK_r$. Choose an $r$-tuple $(x_1, x_2, \ldots, x_r)$ from $V_1\times V_2\times\cdots\times V_r$ uniformly at random.
Let $T=\{x_1, x_2, \ldots, x_r\}$ and $X(T)$ be the number of copies of $K_s$ in $G[T]$. Then,
\begin{align}\label{eq3}
\mathbb{E}(X(T))=\sum_{S\in K_s(G)}Pr(S\subset T)=\sum_{A:A\subset [r]\atop|A|=s}\sum_{S\in K_s(V_A)}\frac{1}{n_A}
=\sum_{A:A\subset [r]\atop|A|=s}\frac{k_s(V_A)}{n_A}.
\end{align}

On the other hand, let $m$ be the number of copies of $K_r$ in $G$.  By a similar argument as in the proof of Theorem \ref{main-1}, we have $m\leq(k-1)n_2n_3^{r-2}$. If $s=r$, then the theorem holds already  (because $h^{(r)}_k(n_1,n_2,\ldots,n_r) = (k-1)n_2\ldots n_r$),  so we are left with the case $s\leq r-1$.  Let $A_T$ be the event that $H[T]$ is a copy of $K_r$. Clearly, we have
\[
Pr(A_T)\leq  \frac{k-1}{n_1}.
\]
Since there are $\binom{r}{s}$ $s$-cliques in $K_r$ and at most ${r\choose s}-{r-2\choose s-2}$ $s$-cliques in a graph on $r$ vertices that is not a complete graph, it follows that
\begin{align}\label{eq4}
\mathbb{E}(X(T))&= \mathbb{E}(X(T)|A_T) Pr(A_T) +\mathbb{E}(X(T)|\overline{A_T}) Pr(\overline{A_T})\nonumber\\[5pt]
&\leq{r\choose s}Pr(A_T)+\left({r\choose s}-{r-2\choose s-2}\right)(1-Pr(A_T))\nonumber\\
&={r\choose s}-{r-2\choose s-2}+{r-2\choose s-2}Pr(A_T)\nonumber\\
&\leq{r\choose s}-{r-2\choose s-2}+\frac{k-1}{n_1}{r-2\choose s-2}.
\end{align}

Combining (\ref{eq3}) and (\ref{eq4}), we have
\begin{align}\label{ineq-op2}
\sum_{A:A\subset [r]\atop|A|=s}k_s(V_A)\frac{1}{n_A}\leq {r\choose s}-{r-2\choose s-2}+\frac{k-1}{n_1}{r-2\choose s-2}.
\end{align}

We consider the linear programming model (P2) as follows:
\begin{align*}
\max\quad &z=\sum_{A:A\subset [r]\atop|A|=s} x_A\\
s.t.\quad&\sum\limits_{A:A\subset [r]\atop|A|=s} n_A^{-1} x_A\leq {r\choose s}-{r-2\choose s-2}+\frac{k-1}{n_1}{r-2\choose s-2},\\
&0\leq x_A\leq n_A, \ A\in \binom{[r]}{s}.
\end{align*}

Apply Lemma \ref{op} by setting $N={r\choose s}$, $b={r\choose s}-{r-2\choose s-2}+\frac{k-1}{n_1}{r-2\choose s-2}$ and $w_i$ be the $i$-th largest value in $\{n_A\colon A\in \binom{[r]}{s}\}$ for each $i\in 1,2,\ldots,\binom{r}{s}$ in (P). Note that $n_A=n_3^s$ for $A\in {[3,r]\choose s}$,
$n_A=n_2n_3^{s-1}$ for $A\in {[2,r]\choose s}$ and $2\in A$, $n_A=n_1n_3^{s-1}$ for $A\in {[r]\setminus \{2\}\choose s}$ and $1\in A$, $n_A=n_1n_2n_3^{s-2}$ for $A\in {[r]\choose s}$ and $\{1,2\}\subset A$. Since $n_3^s\geq n_2n_3^{s-1}\geq n_1n_3^{s-1} \geq n_1n_2n_3^{s-2}$ and
\[
\binom{r-2}{s}+ \binom{r-2}{s-1}+\binom{r-2}{s-1} = \binom{r-1}{s}+\binom{r-2}{s-1} = \binom{r}{s} -\binom{r-2}{s-2},
\]
it follows that $w_{i} = n_1n_2n_3^{s-2}$ for $i\geq \binom{r}{s} -\binom{r-2}{s-2}+1$. Since $M=\lfloor b\rfloor \geq \binom{r}{s} -\binom{r-2}{s-2}$, we have $w_{M+1} = n_1n_2n_3^{s-2}$.
Thus, the optimal value of (P2) is
\begin{align*}
\sum_{i=1}^{M} w_i +a =&\sum_{i=1}^{\binom{r}{s} -\binom{r-2}{s-2}} w_i+ \sum_{i=\binom{r}{s} -\binom{r-2}{s-2}+1}^Mw_i+w_{M+1}(b - M)\\[5pt]
=&\sum_{A:A\subset [r]\atop|A|=s,\{1,2\}\not\subset A} n_A +\sum_{i=\binom{r}{s} -\binom{r-2}{s-2}+1}^Mw_{M+1}+(b - M)w_{M+1}\\[5pt]
=& \sum_{A:A\subset [r]\atop|A|=s,\{1,2\}\not\subset A} n_A  +\left(M-\binom{r}{s} -\binom{r-2}{s-2}+b - M\right)n_1n_2n_3^{s-2}\\[5pt]
=& \sum_{A:A\subset [r]\atop|A|=s,\{1,2\}\not\subset A} n_A  +\frac{k-1}{n_1}{r-2\choose s-2}n_1n_2n_3^{s-2}\\
=&\sum_{A:A\subset [r]\atop|A|=s,\{1,2\}\not\subset A} n_A +\sum_{A:A\subset [3,r]\atop|A|=s-2}(k-1)n_2 n_A\\
=&h_{k}^{(s)}(n_1,n_2,\underbrace{n_3,\ldots,n_3}_{r-2}).
\end{align*}

Let $y$ be a vector indexed by $s$-element subset $A$ of $[r]$ with $y_A=k_s(V_A)$. Since $k_s(V_A)\leq n_A$ and the inequality \eqref{ineq-op2} holds, it follows that $y$ is a feasible solution to (P2).  Therefore, we obtain that
\[
k_s(G)= \sum_{A:A\subset [r]\atop|A|=s} k_s(V_A) \leq  h_{k}^{(s)}(n_1,n_2,\underbrace{n_3,\ldots,n_3}_{r-2}).
\]
Thus, the theorem holds.
\qed
\\

Let $f,k\geq 1$ be integers. A $k$-matching is a matching of size $k$. Given a coloring $c: E(G) \rightarrow[f]$ of the edges of an $r$-graph $G$, we call a matching $M\subset E(G)$ a rainbow matching if all its edges have distinct colors. An $(f,k)$-colored $r$-graph $G=(V,E)$ is an $r$-uniform multi-hypergraph whose edges are colored in $f$ colors such that every color class contains a $k$-matching. Denote by $f(r,k)$ the largest number $f$ of colors such that there exists an $(f,k)$-colored $r$-partite $r$-graph without a rainbow $k$-matching.  Recently, Glebov, Sudakov and Szab\'{o} \cite{glebov} gave an upper bound on $f(r,k)$.

\begin{thm}\label{le2}\cite{glebov}
For arbitrary integers $r, k\geq 2$, $f(r,k)<(r+1)^{r+1}(k-1)k^{2r}$.
\end{thm}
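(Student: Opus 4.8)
The plan is to prove the equivalent contrapositive formulation: every $r$-partite $r$-graph whose edges are colored with $f \ge (r+1)^{r+1}(k-1)k^{2r}$ colors, each color class containing a matching of size $k$, must contain a rainbow $k$-matching. I would argue by contradiction. Suppose $G$ has no rainbow $k$-matching, and let $M=\{e_1,\ldots,e_t\}$ be a rainbow matching of maximum size; by assumption $t\le k-1$. Write $C_M$ for the $t$ colors used and $U=V(M)$ for the $rt\le r(k-1)$ covered vertices. The first, and only completely routine, observation is a \emph{blocking property}: if $c\notin C_M$ and the color class of $c$ contained an edge disjoint from $U$, we could append it to $M$ and obtain a rainbow $(t+1)$-matching, contradicting maximality. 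Hence every edge of every color outside $C_M$ meets $U$, and in particular the $k$ disjoint edges of any fixed witnessing $k$-matching $N_c$ each meet $U$ in at least one vertex.

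The heart of the argument is a swap/augmentation scheme that amplifies this blocking property. For a free color $c\notin C_M$, since the $k$ disjoint edges of $N_c$ all meet the $rt<rk$ vertices of $U$, a counting of incidences forces some edge $g_c\in N_c$ to meet $U$ in only a few vertices, at most $r-1$ and at least one. Choosing $g_c$ so that $g_c\cap U$ lies inside a single $e_i\in M$, I would perform the swap $M'=M-e_i+g_c$: this is again a rainbow matching of size $t$, it uses the new color $c$ in place of $c_i$, and it frees the $r-1$ vertices of $e_i\setminus g_c$. Now $c_i$ itself is a free color, so its $k$-matching is blocked by $V(M')$, and the situation recurses. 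Iterating produces an alternating \emph{tree} of swaps whose levels are indexed by the $\le r$ coordinates (each swap frees vertices lying in the $r$ parts) and whose nodes record the color currently being re-inserted. The key quantitative input is that at each level only $O(rk)$ vertices are frozen, so a color class of size-$k$ matching always supplies a usable disjoint edge unless it is entirely trapped; bounding, per coordinate, the choice of which freed part to refill (a factor of shape $r+1$) against the matching size times the number of pinned edges (a factor of shape $k^2$) produces, over the $\le r$ levels, the product $(r+1)^{r+1}k^{2r}$, while the factor $(k-1)$ records the $t\le k-1$ edges of $M$ available to seed a swap.

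I would close by a pigeonhole on \emph{types}. To each free color assign the bounded combinatorial type of its interaction with the frozen structure — which vertices of $U$ its matching is forced to use and in which parts, together with the seeding edge of $M$. The analysis above shows that the total number of such types a counterexample can realize without ever completing an augmentation is at most $(r+1)^{r+1}(k-1)k^{2r}$. Consequently $f\le$ (number of realizable types) $<(r+1)^{r+1}(k-1)k^{2r}$ in any counterexample; if $f$ met or exceeded the threshold, two free colors would collide on a common type in a manner that makes their swap trees compatible, yielding a genuine augmenting configuration and hence a rainbow $(t+1)$-matching, contradicting the maximality of $M$. This contradiction gives the claimed strict inequality.

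The step I expect to be the main obstacle is the augmentation itself in the hypergraph regime $r\ge 3$. Unlike the graph case, swapping one edge frees $r-1$ vertices at once, and refilling them may consume several edges of the re-inserted color, so the alternating object is a branching tree rather than an augmenting path; one must simultaneously guarantee global disjointness of all edges chosen along the tree and the all-distinct-colors condition across the whole tree. Controlling this interaction — ensuring that the $O(rk)$ frozen vertices never exhaust any color's $k$-matching, and counting types cleanly rather than merely qualitatively — is precisely where the factors $(r+1)^{r+1}$ and $k^{2r}$ are forced, and pinning down these constants (rather than an arbitrary function of $r$ and $k$) is the delicate part of the proof.
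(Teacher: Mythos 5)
First, a point of reference: the paper does not prove this statement at all --- Theorem \ref{le2} is imported verbatim from Glebov, Sudakov and Szab\'o \cite{glebov} and used as a black box in the proof of Theorem \ref{main-3}. So your proposal can only be judged as a standalone proof, and as such it has genuine gaps at both of its load-bearing steps. The first is the swap step. From $|U|=rt\le r(k-1)<rk$ and the blocking property you correctly get an edge $g_c\in N_c$ meeting $U$ in at most $r-1$ vertices; but you then ``choose $g_c$ so that $g_c\cap U$ lies inside a single $e_i\in M$,'' and nothing forces such a choice to exist. The vertices of $g_c\cap U$ can be spread over up to $r-1$ distinct edges of $M$, in which case no single exchange $M-e_i+g_c$ produces a matching, and the entire recursion (the ``swap tree'') has no base operation. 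Handling edges of free colors that pin several edges of $M$ simultaneously is precisely the difficulty that makes the hypergraph case ($r\ge 3$) hard, and your sketch assumes it away rather than resolving it.

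The second gap is that the quantitative core is circular. The factors are announced --- ``a factor of shape $r+1$'' per level, ``a factor of shape $k^2$'' per level, ``over the $\le r$ levels,'' times $(k-1)$ for the seed edge --- but none of them is derived from a defined object: the ``type'' of a free color is never specified, the count of realizable types is asserted to be at most $(r+1)^{r+1}(k-1)k^{2r}$ with no counting argument, and the claim that two colors of equal type have ``compatible swap trees'' yielding an augmentation is exactly the statement that needs proof. You also flag, correctly, that one must maintain global vertex-disjointness and all-distinct colors across a branching tree of swaps, and then leave that unestablished; since a node of the tree re-inserts a color whose witnessing $k$-matching may itself be blocked by vertices frozen at earlier levels, the assertion that ``a color class always supplies a usable disjoint edge unless it is entirely trapped'' needs a careful invariant that the proposal never states. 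In short, the high-level strategy (maximize a rainbow matching, exploit the blocking property, amplify via exchanges) is the right family of ideas and is in the spirit of the actual Glebov--Sudakov--Szab\'o argument, but what you have written is a plan whose two pivotal claims are, respectively, false as stated and unsupported, with the target bound reverse-engineered rather than proved.
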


Now we consider the maximum number of copies of $K_s$ in a $kK_r$-free $r$-partite graph for $n_3\leq n_4\leq \cdots\leq n_r$.

\begin{proof}[Proof of Theorem \ref{main-3}]
Let $r\geq 4$, $n_1$, $n_2$ and $n_3$ be fixed integers. The proof is  by induction on  $(s,\sum_{i=4}^r (n_i-n_3))$.
 The base case of $s=2$ is verified for all $r$ and $n_1\leq n_2\leq \cdots\leq n_r$ by Theorem \ref{s2}. For every $s\geq 3$, the base case of $n_1\leq n_2\leq n_3=n_4=\cdots=n_r$ is verified for all $r$ by  Theorem \ref{main-2}. Assume that for all $r$, the theorem holds for all pairs $(s',\sum_{i=4}^r (n_i'-n_3'))$ such that $s'<s$ or $s'=s$ together with $\sum_{i=4}^r (n_i'-n_3')<\sum_{i=4}^r (n_i-n_3)$.

 Suppose $G\subseteq K_{n_1, \ldots, n_r}$ does not contain a copy of $kK_r$. Since $\sum_{i=4}^r (n_i-n_3)>0$, there exists an $i\in [4,r]$  such that $n_i> n_{i-1}$. Without loss of generality, assume that $i=r$. For $u\in V_r$, let $G(u)$ denote the $(r-1)$-partite graph on vertex classes $V_1, \ldots , V_{r-1}$, and a pair $\{v_i,v_j\}$ forms an edge in $G(u)$ if and only if $\{u,v_i\},\{u,v_j\}$ and $\{v_i,v_j\}$ are all edges in $G$.
 If there is a vertex $u\in V_r$ such that $G(u)$ is $kK_{r-1}$-free, then by induction on $s$, we have $k_s(u,G) = k_{s-1}(G(u)) \leq h^{(s-1)}_{k}(n_1,n_2,\ldots,n_{r-1})$. Moreover, by induction on $\sum_{i=4}^r (n_i-n_3)$, we obtain that $k_s(G\setminus\{u\})\leq h^{(s)}_{k}(n_1,n_2,\ldots,n_{r-1},n_r-1)$. Therefore,
 \begin{eqnarray*}
 k_s(G)&=&k_s(G\setminus\{u\})+k_s(u,G)\\
 &\leq&h^{(s)}_{k}(n_1,n_2,\ldots,n_{r-1},n_r-1)+h^{(s-1)}_{k}(n_1,n_2,\ldots,n_{r-1})\\
 &=&h^{(s)}_{k}(n_1,n_2,\ldots,n_{r-1},n_r).
 \end{eqnarray*}

 Otherwise, suppose that for all $u\in V_r$, there are at least $k$ vertex-disjoint copies of $K_{r-1}$ in $G(u)$. Since $G$ is $kK_r$-free, we have $k\geq 2$.
Let $H$ be an $(r-1)$-partite $(r-1)$-uniform multi-hypergraph on vertex classes $V_1,\ldots, V_{r-1}$. For any $u\in V_r$, if $\{u_1, \ldots, u_{r-1}\}$ forms  a copy of $K_{r-1}$ in $G(u)$, let $\{u_1, \ldots, u_{r-1}\}$ be an edge in $H$ with color $u$. Then  $H$  is $(n_r, k)$-colored.
Since $n_r\geq n_4\geq r^r(k-1)k^{2r-2}>f(r-1,k)$, by Theorem \ref{le2}, there is a rainbow $k$-matching $\{e_{i_1},\ldots, e_{i_k}\}$ in $H$. Thus, there are $k$ vertices $\{u_{i_1}, \ldots, u_{i_k}\}\subset V_r$ such that $\{e_{i_1}\cup \{u_{i_1}\}, \ldots, e_{i_k}\cup \{u_{i_k}\}\}$ forms a $kK_r$ in $G$, a contradiction.  Thus, we complete the proof.
\end{proof}

\vspace{10pt}\noindent
{\bf Acknowledgement.} The authors would like to thank two anonymous referees for their
helpful suggestions. The second author was supported by the National Natural Science
Foundation of China (No. 11701407) and Scientific and Technological Innovation Programs
of Higher Education Institutions in Shanxi (No. 183090222-S).

\begin{appendix}
\section{A proof of Theorem \ref{s1}.}

\begin{lem}\label{ma2}
For $n_1\leq n_2\leq n_3$ and $k\leq n_1$,
\begin{eqnarray*}
ex(K_{n_1,n_2, n_3}, kK_2)=(k-1)(n_2+n_3).
\end{eqnarray*}
\end{lem}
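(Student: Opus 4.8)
The plan is to obtain the lower bound by specializing the construction $H_1$ to $r=3$, and the upper bound by induction on $k$. For the lower bound I would take $V_1'\subseteq V_1$ with $|V_1'|=k-1$ and let $H$ consist of all edges of $K_{n_1,n_2,n_3}$ meeting $V_1'$; then $V_1'$ is a vertex cover of size $k-1$, so $\nu(H)\le k-1$ and $H$ is $kK_2$-free, while $e(H)=(k-1)(n_2+n_3)$. For the upper bound, let $H\subseteq K_{n_1,n_2,n_3}$ satisfy $\nu(H)\le k-1$ with $k\le n_1$; I will prove $e(H)\le(k-1)(n_2+n_3)$ by induction on $k$, the case $k=1$ being immediate since then $H$ has no edge. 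The elementary fact I would use throughout is that \emph{every} vertex has degree at most $n_2+n_3$: a vertex of $V_i$ sends edges only to the other two classes, and among $n_2+n_3$, $n_1+n_3$, $n_1+n_2$ the largest is $n_2+n_3$ because $n_1\le n_2\le n_3$.

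The inductive step splits according to whether $H$ has an \emph{essential} vertex, i.e. a vertex $v$ with $\nu(H-v)=\nu(H)-1$. If such a $v$ exists, I would delete it. Then $H-v$ is a subgraph of the complete tripartite graph obtained from $K_{n_1,n_2,n_3}$ by decrementing the class containing $v$, its matching number is at most $k-2$, and its smallest class size is still at least $k-1$ (the minimum class size drops by at most one, from a value $\ge n_1\ge k$). Hence the induction hypothesis applies and yields $e(H-v)\le(k-2)\sigma$, where $\sigma$ is the sum of the two larger class sizes of $H-v$. A short case check using $n_1\le n_2\le n_3$ shows $\sigma\le n_2+n_3$ regardless of which class contains $v$, so $e(H)=e(H-v)+d_H(v)\le(k-2)(n_2+n_3)+(n_2+n_3)=(k-1)(n_2+n_3)$.

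If $H$ has no essential vertex, then every vertex is missed by some maximum matching, so by the Gallai--Edmonds structure $H$ is a disjoint union of factor-critical components $K_1,\dots,K_m$ (the isolated vertices being the trivial pieces). Each $K_i$ satisfies $|K_i|=2\nu(K_i)+1$ and $\sum_{i}\nu(K_i)=\nu(H)\le k-1$. Since $\nu(K_i)\le k-1\le n_2-1$, I would get $|K_i|=2\nu(K_i)+1\le n_2+n_3$, whence $e(K_i)\le\binom{|K_i|}{2}=\nu(K_i)\bigl(2\nu(K_i)+1\bigr)\le\nu(K_i)(n_2+n_3)$. Summing over $i$ gives $e(H)\le(k-1)(n_2+n_3)$, completing the induction.

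The hardest part will be the bookkeeping that makes the essential-vertex deletion close up exactly: verifying that $\sigma\le n_2+n_3$ and that the reduced smallest class stays $\ge k-1$ precisely when $v$ lies in $V_2$ or $V_3$, where the degree bound $d_H(v)\le n_2+n_3$ is no longer tight. This is exactly the point at which both the ordering $n_1\le n_2\le n_3$ and the hypothesis $k\le n_1$ are essential. The other delicate ingredient is justifying the dichotomy itself, namely that a graph with no essential vertex decomposes into factor-critical components; once that structural fact is in hand, the factor-critical case is an easy direct estimate.
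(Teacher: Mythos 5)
Your proof is correct, but it takes a genuinely different route from the paper's. The paper first proves the balanced case $n_1=n_2=n_3=n$ by induction on $k$, using the shifting operator to reduce to a \emph{stable} graph, analyzing $T_0=\{x_1,y_1,z_1\}$ and splitting into cases $\nu(G\setminus T_0)\in\{k-2,k-3,k-4\}$ (with Hall's theorem in the middle case), and then handles general $n_1\leq n_2\leq n_3$ by a second induction on $n_2+n_3-2n_1$, deleting a vertex of degree at most $k-1$ from the largest class. You instead run a single induction on $k$ for arbitrary class sizes, splitting on whether some vertex $v$ is essential: if so, deletion plus the uniform degree bound $d_H(v)\leq n_2+n_3$ closes the induction (and your bookkeeping checks out: the reduced smallest class is at least $n_1-1\geq k-1$, and the sum $\sigma$ of the two larger reduced classes is at most $n_2+n_3$ in all three deletion cases); if not, every vertex is missed by a maximum matching, so $D(H)=V(H)$ in the Gallai--Edmonds decomposition and $H$ is a disjoint union of factor-critical components $K_i$ with $|K_i|=2\nu(K_i)+1\leq 2k-1\leq n_2+n_3-1$, giving $e(K_i)\leq\binom{|K_i|}{2}=\nu(K_i)\left(2\nu(K_i)+1\right)\leq\nu(K_i)(n_2+n_3)$ and hence $e(H)\leq\nu(H)(n_2+n_3)\leq(k-1)(n_2+n_3)$. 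What each approach buys: the paper's argument is self-contained and elementary, relying only on its own shifting lemma and the matching-partition Lemma 2.1, at the cost of a two-stage induction and a fairly intricate case analysis; your argument is shorter, treats all class sizes uniformly without any stability reduction, and in fact barely uses tripartiteness (only through the degree bound), but it imports the Gallai--Edmonds structure theorem (equivalently, Gallai's lemma that a connected graph in which every vertex is avoided by some maximum matching is factor-critical) as a nontrivial black box, which you should cite or prove if the surrounding text is meant to be self-contained.
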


\begin{proof}
 First, we prove the lemma for $n_1=n_2=n_3=n$ by induction on $k$. Clearly, the lemma holds trivially for $k=1$. We assume that the result holds for all $k'$ with $k'< k\leq n$. Suppose $G$ is a $kK_2$-free 3-partite graph with vertex set $V=X\cup Y\cup Z$ and let
 $$ X=\{x_1, \ldots, x_{n}\},\quad Y=\{y_1, \ldots, y_{n}\}\quad\mbox{and}\quad Z=\{z_1, \ldots, z_{n}\}.$$
Define a partial order $\prec$ on $X\cup Y\cup Z$ such that
\[
x_1 \prec \cdots\prec x_{n},\ y_1 \prec \cdots\prec y_{n},\ z_1 \prec \cdots\prec z_{n},
\]
and vertices from different parts are incomparable. Assume that $G$ has maximal number of edges. Thus, $\nu(G)=k-1$. By Lemma 2.2, we may further assume that $G$ is stable. Let $T_0=\{x_1, y_1, z_1\}$ and $G'=G\setminus T_0$. Furthermore, let $\nu(G')=t$ and let $M'=\{e_1, \ldots, e_t\}$ be a largest matching in $G'$. If $G[T_0]$ is not a triangle, since $G$ is stable, there exist two vertex sets $V_i, V_j\in\{X, Y, Z\}$ such that $G[V_i\cup V_j]$ is empty. It follows that $G$ is a bipartite graph. Then by Lemma \ref{ma} with $r=2$, we conclude that $e(G)\leq 2(k-1)n$ and the lemma holds. If $G[T_0]$ is a triangle, then we have $k-4\leq \nu(G')\leq k-2$, where $\nu(G')\leq k-2$ follows from $G[T_0]$ being non-empty, and $\nu(G')\geq k-4$ follows from there being only three vertices in $T_0$ and from $\nu(G)=k-1$.
The proof splits into three cases according to the value of $\nu(G')$.

\textbf{Case 1.} $\nu(G')=k-2$. For every edge $\{u_i,v_i\} \in M'$, it is easy to see that the number of edges between $\{u_i,v_i\}$ and $T_0$ is at most 4 since $G$ is a 3-partite graph. Thus, there are at most $4(k-2)$ edge between $\cup_{e\in M'} e$ and $T_0$. If $|\Gamma(T_0)|>4(k-2)+3$, then we will find an edge between $T_0$ and $V(G')\setminus \left(\cup_{e\in M'} e\right)$. Without loss of generality, assume $\{x_1,u\}$ is such an edge. {Then} $M'\cup \{\{x_1,u\}, \{y_1,z_1\}\}$ forms a matching of size $k$, which contradicts the fact that $G$ is $kK_2$-free. If $|\Gamma(T_0)|\leq 4(k-2)+3$, then {by the induction hypothesis}, we have
\begin{align*}
e(G)&=|\Gamma(T_0)|+e(G')\\
&\leq 4(k-2)+3+2(k-2)(n-1)\\
&= 2(k-1)n - 2n + 2k - 1\\
&\leq 2(k-1)n.
\end{align*}

\textbf{Case 2.} $\nu(G')=k-3$. If $|\Gamma(T_0)|\leq 4n+2(k-3)$, then {by the induction hypothesis}, we have
\[
e(G)=|\Gamma(T_0)|+e(G')\leq 4n + 2(k-3)+2(k-3)(n-1)= 2(k-1)n.
\]
Thus, the lemma holds. If $|\Gamma(T_0)|> 4n+2(k-3)$, let $G''=G'\setminus (\cup_{e\in M'} e)$ {and consider} the edges between $T_0$ and $V(G'')$. Since there are at most $4(k-3)$ edges between $T_0$  and $(\cup_{e\in M'} e)$, the number of edges between $T_0$ and $V(G'')$ is at least $4n+2(k-3)+1-4(k-3)-3=4n-2k+4$. For any $u\in V(G)$ and $S\subset V(G)$, let $d(u,S)$ be the number of neighbors of $u$ in $S$. Then, it follows that
\[
d(x_1,V(G''))+d(y_1,V(G''))+d(z_1,V(G'')) \geq 4n-2k+4.
\]
Since $(Y\cup Z)\setminus (\cup M') \setminus T_0$ has at most $2(n-1)-(k-3)=2n-k+1$ vertices, we have $d(x_1,V(G''))\leq 2n-k+1$. Similarly, $d(y_1,V(G''))\leq 2n-k+1$ and $d(z_1,V(G''))\leq 2n-k+1$. Therefore,  for any $v\in\{x_1, y_1, z_1\}$,  $d(v,V(G''))\geq 4n-2k+4-2(2n-k+1)=2$. It follows from Hall's theorem that there exist three disjoint edges $\{x_1,u_1\}, \{y_1,u_2\}$ and $\{z_1,u_3\}$ with $u_1,u_2,u_3\in V(G'')$. These edges together with edges in $M'$ form a matching of size $k$, which contradicts the fact that $G$ is $kK_2$-free.

\textbf{Case 3.} $\nu(G')=k-4$.
Since $|\Gamma(T_0)|< 6n$, by the induction hypothesis,  we have
\[
e(G)=|\Gamma(T_0)|+e(G')\leq6n+2(k-4)(n-1)\leq 2(k-1)n.
\]
Thus, the lemma holds for $n_1=n_2=n_3=n$.

At last, we prove the lemma for the general case $n_1\leq n_2\leq n_3$ by induction on $n_2+n_3-2n_1$. Since we've already proven the base case $n_2+n_3-2n_1 = 0$, now assume that $n_2+n_3-2n_1>0$. There exists $i=2$ or 3 such that $n_i>n_{i-1}$. Without loss of generality, assume that $i=3$. If there exists $v\in Z$ such that $d(v)\leq k-1$, we have
\begin{align*}
e(G)&= d(v)+e(G\setminus v)\\
&\leq k-1+(k-1)(n_2+n_3-1)\\
&=(k-1)(n_2+n_3).
\end{align*}
If $d(v)\geq k$ for every $v\in Z$, since $|Z|\geq k$,  it is easy greedily to find a matching of size $k$, a contradiction. Thus, we complete the proof.
\end{proof}


\begin{proof}[Proof of Theorem 1.1]
The cases $r=2$ and $r=3$ follow from Lemmas \ref{ma} and \ref{ma2}, respectively.  Thus, {we are left with} the case $r\geq 4$ which we prove by induction on $k$. Clearly, the result holds for $k=1$. Assume that the result holds for all $k'< k$. Let $G\subseteq K_{n_1,\ldots,n_r}$ be a $kK_2$-free graph with the maximum number of edges. Thus, $\nu(G)=k-1$. Denote by $X_i$ the set of vertices in $V_i$ with degree at least $2k-1$ and put $x_i=|X_i|$ for $i=1, \ldots, r$. Let $n=n_1+\cdots n_r$ and $x=x_1+\cdots +x_r$. Now we divide the proof into two cases according to the value of $x$.

\textbf{Case 1.} $x\geq 1$. Let $X=\bigcup_{i=1}^r X_i$ and $G' =G\setminus X$.
 Since $d(u)\geq 2k-1$ for each  $u\in X$,  it is easy to see that $x\leq k-1$ and $\nu(G')\leq k-1-x$ because otherwise one could greedily find a matching of size $k$. Let $\bar{x}_i=x-x_i$ and $n_{i_0}-x_{i_0} =\min_{i\in [r]} \{n_i-x_i\}$. By the induction hypothesis, we have
\begin{eqnarray*}
e(G)&=& |\Gamma(X)|+e(G')\\
&\leq&\sum_{i<j}x_ix_j+\sum_{i=1}^rx_i\left(\sum_{j\neq i}(n_j-x_j)\right)+(k-1-x)\left(\sum_{i=1}^r(n_i-x_i)-\min_{i\in [r]}\{n_i-x_i\}\right)\\
&=&(k-1)n-(k-1)(x+n_{i_0}-x_{i_0})+\sum_{i<j}x_ix_j+\sum_{i=1}^rx_i(n_{i_0}-x_{i_0}-(n_i-x_i))\\
&\leq&(k-1)n-(k-1)(n_{i_0}+\overline{x}_{i_0})+\sum_{i<j}x_ix_j\\
&=&(k-1)(n-n_{i_0})-(k-1)\overline{x}_{i_0}+x_{i_0}\overline{x}_{i_0}+\sum_{i<j\atop i,j\neq i_0}x_ix_j\\
&\leq&(k-1)(n-n_1)-\overline{x}_{i_0}^2+\sum_{i<j\atop i,j\neq i_0}x_ix_j\\
&=&(k-1)(n-n_1)-\sum_{i\neq i_0}x_i^2-\sum_{i<j\atop i,j\neq i_0}x_ix_j\\
&\leq&(k-1)(n_2+\cdots+n_r),
\end{eqnarray*}
where the second inequality follows from $n_{i_0}-x_{i_0}-(n_i-x_i)\leq 0$ and the {third} inequality follows from $n_{i_0}\geq n_1$ and $x_{i_0}+\overline{x}_{i_0}=x\leq k-1$. Thus, the theorem holds.

\textbf{Case 2.}  $x=0$. Then all the vertices in $G$ have degree at most $2k-2$. Let $M=\{\{u_1,v_1\}, \ldots, \{u_{k-1},v_{k-1}\}\}$ be a largest matching of $G$, $A=\{u_1,\ldots, u_{k-1},v_1, \ldots, v_{k-1}\}$ and $B=V(G)\setminus A$. Since $M$ is a largest matching,  $B$ is an independent set of $G$. Let $t_i$ be the number of edges between $\{u_i,v_i\}$ and $B$. We claim that $t_i\leq 2k-2$. Otherwise, there exist $u,v\in B$ such that both $\{u_i,u\}$ and $\{v_i,v\}$ {are edges} of $G$, and then  $M'\setminus\{\{u_i,v_i\}\}\cup\{\{u_i,u\}, \{v_i,v\}\}$ forms a matching of size $k$,  a contradiction. Since $d(v)\leq 2k-2$ for every $v\in V(G)$ and $d_B(u_i)+d_B(v_i)=t_i$, we have $d_A(u_i)+d_A(v_i)\leq4k-4-t_i$. Thus,  we have
\begin{eqnarray*}
e(G)&=&e(A,B)+e(A) \\
&=&\sum_{v\in A}d_B(v) +\frac{1}{2}\sum_{v\in A}d_A(v)\\
&=&\sum_{i=1}^{k-1}(d_B(u_i)+d_B(v_i))+\frac{1}{2}\sum_{i=1}^{k-1}(d_A(u_i)+d_A(v_i))\\
&\leq &\sum_{i=1}^{k-1} t_i+\frac{1}{2}\sum_{i=1}^{k-1}(4k-4-t_i)\\
&= &\frac{1}{2}\sum_{i=1}^{k-1} t_i+\frac{1}{2}(k-1)(4k-4)\\
&\leq &(k-1)(3k-3)\\
&< &(k-1)(n_2+\cdots+n_r),
\end{eqnarray*}
where the last inequality follows from  $r\geq 4$ and $n_1\geq k$. Thus, we complete the proof.
\end{proof}
\end{appendix}

\end{document}